\newtheorem{thm}{Theorem}[section]
\newtheorem{lemma}[thm]{Lemma}
\newtheorem{prop}[thm]{Proposition}
\theoremstyle{definition}
\newtheorem{example}[thm]{Example}
\newtheorem{remark}[thm]{Remark}
\newtheorem{notation}[thm]{Notation}
\newtheorem{definition}[thm]{Definition}
\newtheorem{question}[thm]{Question}
\numberwithin{equation}{section}
\newcommand{\Sym}{\mathrm{Sym}}
\newcommand{\Hilb}{\mathrm{Hilb}}
\newcommand{\Hom}{\mathrm{Hom}}
\newcommand{\rank}{\mathrm{rank}}
\newcommand{\GL}{\mathrm{GL}}
\newcommand{\kk}{{\mathbf{k}}}
\newcommand{\N}{\mathbb{N}}
\newcommand{\Z}{\mathbb{Z}}
\newcommand{\G}{\mathbb{G}}
\newcommand{\mdeg}{\deg_{T}}
\newcommand{\HF}{\mathrm{HF}}
\newcommand{\sfX}{\mathsf{X}}
\newcommand{\sfY}{\mathsf{Y}}
\newcommand{\sfV}{\mathsf{V}}
\newcommand{\sfW}{\mathsf{W}}
\newcommand{\scF}{\mathscr{F}}
\newcommand{\scS}{\mathscr{S}}
\newcommand{\scQ}{\mathscr{Q}}
\newcommand{\scE}{\mathscr{E}}
\newcommand{\scI}{\mathscr{I}}
\newcommand{\scO}{\mathscr{O}}
\newcommand{\scR}{\mathscr{R}}
\newcommand{\bfd}{\mathbf{d}}
\newcommand{\bfe}{\mathbf{e}}
\newcommand{\bff}{\mathbf{f}}
\newcommand{\bfv}{\mathbf{v}}
\newcommand{\bfw}{\mathbf{w}}
\newcommand{\mfh}{{\mathfrak{h}}}
\newcommand{\mcH}{{\mathcal{H}}}
\newcommand{\mcA}{{\mathcal{A}}}
\newcommand{\mcP}{{\mathcal{P}}}
\newcommand{\ct}{{\mathrm{ct}}}
\newcommand{\al}{\alpha}
\newcommand{\be}{\beta}
\begin{document}

\author[R. Ramkumar]{Ritvik Ramkumar}
\address{Ritvik Ramkumar: Department of Mathematics, Cornell University
\hfill\newline
\indent 310 Malott Hall
Ithaca, NY 14853, USA}
\email{{\tt ritvikr@cornell.edu}}

\author[A. Sammartano]{Alessio Sammartano}
\address{Alessio Sammartano: Dipartimento di Matematica, Politecnico di Milano \hfill\newline
\indent Via Bonardi 9, 20133 Milan, Italy}
\email{{\tt alessio.sammartano@polimi.it}}

\title{Cartwright-Sturmfels Hilbert schemes}

\maketitle

\begin{abstract}
Let $S$ be the Cox ring of a product of $r$ projective spaces. 
In this paper, we study the Cartwright-Sturmfels Hilbert schemes of $S$, which are multigraded Hilbert schemes that only parametrize radical ideals. 
Our main result shows that these Hilbert schemes are always smooth and irreducible if the Picard rank $r$ is at most 2.
This result can be
seen as a multigraded analogue of  the famous theorems of Fogarty and Maclagan-Smith, where the Picard rank replaces the dimension of the ambient space.
\end{abstract}

\section{Introduction}\label{SectionIntroduction}

Multigraded rings are ubiquitous in many fields of mathematics, including commutative algebra, algebraic geometry, algebraic combinatorics, representation theory,  and more.
To study deformations of ideals  in a multigraded polynomial ring $S$, 
Haiman and Sturmfels constructed the \emph{multigraded Hilbert scheme} $\mcH^{\mfh}(S)$, 
which parametrizes homogeneous ideals in $S$ with a fixed Hilbert function $\mfh$ \cite{HaimanSturmfels}.
This construction is  a fundamental  and versatile tool, 
which allows  a unifying treatment of various Hilbert schemes 
(Grothendieck Hilbert schemes of projective space, 
Hilbert schemes of points in affine space,
toric Hilbert schemes, 
$G$-Hilbert schemes),
and
can be used in constructing new  moduli spaces (see e.g. \cite{Stable,EE,ST}).
Moreover, it  has many applications outside the field, 
for instance, 
to singularities \cite{Bounded},
complex manifolds \cite{DS,Liu},
number theory \cite{Kreidl},
tensors \cite{BB,CHL,HMV},
and computer vision \cite{AST}.

We remark that
the class of all multigraded Hilbert schemes is in a sense too vast,
given the wide variety of possible gradings and Hilbert functions,
and it presents many pathologies.
For example,
every Grothendieck Hilbert scheme is  connected \cite{Hartshorne}
and has a distinguished point, the lexicographic point, which is  smooth \cite{ReevesStillman};
these facts are  essential, for instance, 
 in the recent classification  of smooth Grothendieck Hilbert schemes \cite{SkjelnesSmith}.
By contrast, multigraded Hilbert schemes can be disconnected \cite{Santos},
 and their lexicographic points can be singular  \cite{RS22}.
However, with suitable assumptions on the grading groups and Hilbert functions, 
one can expect to obtain interesting classes of Hilbert schemes.

We now introduce the class that is the subject of this paper.
Let $\kk$ be an algebraically closed field, and  
$S = \kk[x_{i,j} \, : \, 1 \leq i \leq r, 0 \leq j \leq n_j]$  
the Cox ring of $\mathbb{P}^{\mathbf{n}}=\mathbb{P}^{n_1}\times \cdots \times \mathbb{P}^{n_r}$.
Then, $S$ is a standard multigraded polynomial ring, 
graded by the Picard group $\mathbb{Z}^r$ so that $\deg(x_{i,j}) = \mathbf{e}_i \in \mathbb{Z}^r$.
A remarkable theorem of Brion states that multiplicity-free subvarieties of $\mathbb{P}^{\mathbf{n}}$ are arithmetically  Cohen-Macaulay and normal,
and that they admit a flat degeneration to a reduced union of products of linear spaces \cite{Brion}. 
Cartwright and Sturmfels investigated the diagonal of a product of projective spaces, and  proved that \text{all} flat degenerations are reduced \cite{CartwrightSturmfels}.  
Conca, De Negri, and Gorla developed a theory of \emph{Cartwright-Sturmfels ideals},
which can be defined as ideals of $S$ with the same Hilbert function as a radical Borel-fixed ideal
\cite{CDNG15,CDNG18,CDNG20}.
They show that this class generalizes the objects of \cite{Brion} and \cite{CartwrightSturmfels},
includes  an abundance of interesting objects from commutative algebra and combinatorics,
and that it enjoys many surprising properties related to homological invariants,  Gr\"obner bases, and more. 
The study of Cartwright-Sturmfels ideals and multiplicity-free varieties is a very active area of research,
see for instance \cite{CCRMM} and the references therein. 
 This motivates the following definition:

\begin{definition} A multigraded Hilbert scheme $\mcH^\mfh(S)$ is  \textbf{Cartwright-Sturmfels} if it only parmeterizes radical ideals.
\end{definition}

This is equivalent to saying that  $\mcH^\mfh(S)$ parametrizes Cartwright-Sturmfels ideals (see 
\Cref{PropCSpropertyHilbertFunction}), which explains our choice of terminology. 
A notable feature of these Hilbert schemes is that they have a unique Borel-fixed point. 
This, together with  the many rigidity properties of Cartwright-Sturmfels ideals, 
suggests that the class of Cartwright-Sturmfels Hilbert schemes may be well-behaved, and our main theorem 
offers a precise formulation of this idea.

\begin{thm}\label{MainTheorem}
If $S$ is a standard $\Z^2$-graded polynomial ring, 
then every  Cartwright-Sturmfels Hilbert scheme of $S$ is smooth and irreducible.
\end{thm}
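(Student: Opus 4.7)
The overall strategy is to reduce the entire theorem to a single smoothness statement at the unique Borel-fixed point $[J_0] \in \mcH^\mfh(S)$, and then to propagate smoothness to every other point using the torus action. The torus $T = (\kk^*)^{n_1+n_2+2}$ acts on $S$ by rescaling each variable, and hence on $\mcH^\mfh(S)$; a generic one-parameter subgroup $\lambda$ produces flat degenerations $[I] \rightsquigarrow [\mathrm{in}_\lambda(I)]$ for every $[I]$. A multigraded Galligo-type theorem ensures that such a generic initial ideal is Borel-fixed. Since $\mcH^\mfh(S)$ only parametrizes radical ideals by hypothesis, and there is a unique Borel-fixed radical monomial ideal with Hilbert function $\mfh$, every point of $\mcH^\mfh(S)$ degenerates to the same $[J_0]$. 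In particular $\mcH^\mfh(S)$ is connected and $[J_0]$ lies in the closure of every $T$-orbit.

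Next I would compute the tangent space $T_{[J_0]}\mcH^\mfh(S) = \Hom_S(J_0, S/J_0)_{\mathbf{0}}$. For a $\Z^2$-graded squarefree Borel-fixed ideal, the minimal generators are indexed by pairs of subsets $(A,B)$ with $A \subset \{x_{1,0},\ldots,x_{1,n_1}\}$ and $B \subset \{x_{2,0},\ldots,x_{2,n_2}\}$, stable under Borel moves in each group of variables. A multidegree-$\mathbf{0}$ homomorphism $J_0 \to S/J_0$ is determined by choosing, for each minimal generator, a squarefree monomial image of the same bidegree; the admissible images arise from substitutions $x_{i,j} \mapsto x_{i,k}$ with $k<j$ that keep the result outside $J_0$. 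Enumerating these substitutions gives an explicit combinatorial formula for $\dim T_{[J_0]}\mcH^\mfh(S)$.

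The heart of the proof is then to exhibit a smooth irreducible variety $X$ of exactly this dimension together with a morphism $X \to \mcH^\mfh(S)$ with dense image containing $[J_0]$ in its closure. A natural candidate for $X$ is a tower of Grassmannian bundles (or an iterated Schubert-type parametrization) built up bidegree by bidegree: the graded piece $I_{(a,b)}$ is a subspace of $S_{(a,b)}$ of prescribed codimension, and in the Cartwright-Sturmfels regime the successive projections between these Grassmannians should form smooth fiber bundles, providing a small desingularization of $\mcH^\mfh(S)$. Matching $\dim X$ with $\dim T_{[J_0]}\mcH^\mfh(S)$ forces $[J_0]$ to be a smooth point, whence $\mcH^\mfh(S)$ is irreducible since it is connected with a unique component through $[J_0]$.

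Smoothness at every other point then follows formally. For any $[I] \in \mcH^\mfh(S)$, the closure of the $T$-orbit of $[I]$ contains $[J_0]$, so upper semicontinuity of tangent space dimension gives $\dim T_{[I]}\mcH^\mfh(S) \leq \dim T_{[J_0]}\mcH^\mfh(S) = \dim \mcH^\mfh(S)$, while irreducibility gives the reverse bound, forcing equality. The main obstacle will be step three: choosing $X$ so that the induced map to $\mcH^\mfh(S)$ is well-defined, has dense image, and has dimension matching the tangent space count. This is where the hypothesis $r=2$ is essential, because for $r \geq 3$ the natural Grassmannian-type parameter spaces cease to be smooth, mirroring the known failure of Maclagan–Smith-type results in higher Picard rank.
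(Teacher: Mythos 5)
Your outer framework matches the paper's: use the unique Borel-fixed point $[J_\mcA]$ and the degeneration of every point to it (\Cref{LemmaBFDegeneration}) to reduce everything to a single smoothness check, compute the tangent space there from the multigraded decomposition of $\Hom_S(J_\mcA, S/J_\mcA)_{(0,0)}$, exhibit a smooth irreducible scheme of the right dimension mapping onto a neighborhood of $[J_\mcA]$, and propagate by upper semicontinuity. Your tangent-space analysis via substitutions $x_{i,j}\mapsto x_{i,k}$ is also on the right track (it corresponds to the linear and quadratic tangents classified in \Cref{PropLinearTangents,PropQuadraticTangents}).

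However, the step you yourself flag as "the heart of the proof" and "the main obstacle" is where the proposal breaks down, and the candidate you offer for it would not work. Parametrizing an ideal bidegree by bidegree as a flag of subspaces $I_{(a,b)}\subseteq S_{(a,b)}$ does \emph{not} yield a tower of Grassmannian bundles: the closed incidence conditions $S_{(1,0)}\cdot I_{(a,b)}\subseteq I_{(a+1,b)}$ and $S_{(0,1)}\cdot I_{(a,b)}\subseteq I_{(a,b+1)}$ cut out a variety that is singular and of the wrong dimension in general, and nothing about the Cartwright-Sturmfels hypothesis makes these projections smooth fibrations. What the paper actually does is quite different: it first reduces to ideals concentrated in positive degrees (\Cref{PropSmoothConcentratedPositiveDegrees}), then runs an induction on $m+n$. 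At each step it decomposes the radical Borel-fixed ideal via a \emph{cutting process} (\Cref{DefCuttingThreshold,DefCuttingProcess}) as $J_\mcA=J_1\cap J_2$ where $J_1,J_2$ arise from Borel-fixed ideals in two \emph{smaller} bigraded polynomial rings $S'$, $S''$, and constructs a finite morphism $\mcH_\G^{\mfh'}(\scR')\times_\G\mcH_\G^{\mfh''}(\scR'')\to\mcH_\kk^\mfh(S)$ (\Cref{LemmaMapIntersection,PropProjectionFinite}) whose source is smooth and irreducible by the inductive hypothesis (its fibers over the Grassmannian product $\G$ are products of smaller Cartwright-Sturmfels Hilbert schemes). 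The dimension of this source matches the tangent-space count at $J_\mcA$ precisely because the cutting threshold was chosen so that no quadratic tangents straddle the cut. Finally the base of the induction (cutting threshold $0$) requires a separate explicit parametrization by $2\times 2$ minors of a $2\times(s+1)$ matrix, using the $\mathrm{Gr}(s+1,m)\times\mathrm{Gr}(s+1,n)\times\mathrm{PGL}(s+1)^2$ orbit-closure analysis from Cartwright--Sturmfels. None of this inductive or determinantal structure appears in your sketch, and it is not clear how to supply the smooth parameter space $X$ without it.
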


Among the $\Z^{r}$-gradings, 
the case  $r = 2$ is  interesting for a specific reason. 
When $r = 1$, it is easy to show that the only Cartwright-Sturmfels Hilbert schemes are the Grassmannians. 
However, when $r \geq 3$, there are examples of singular Cartwright-Sturmfels Hilbert schemes \cite{CartwrightSturmfels}. 
Our result therefore completes the classification of the ranks for which the Cartwright-Sturmfels Hilbert schemes are always smooth. 
Moreover, this situation unexpectedly mirrors
two other notable smoothness results for Hilbert schemes, where the rank of the grading group is replaced with the dimension of the ambient space:

\begin{enumerate}
\item The Grothendieck Hilbert scheme $\Hilb^{p(t)}(X)$, 
where  $X$ is a smooth projective variety  of dimension $r$: for $r = 1$, it is easily shown to be smooth, for $r \geq 3$ it can be singular, while for $r = 2$  it is smooth by a theorem of Fogarty \cite{Fogarty}.
\item The multigraded Hilbert scheme $\mcH^\mfh(\kk[x_1,\dots,x_r])$ of a polynomial ring with arbitrary grading: 
again,
for $r = 1$, it is easily shown to be smooth, for $r \geq 3$ it can singular, while for $r = 2$  it is smooth by a theorem of Maclagan and Smith \cite{MaclaganSmith}.
\end{enumerate}
We point out that a key distinction between these two results and ours lies in the nature of the ideals being parametrized. 
In the aforementioned results, after standard reductions,  the ideals being parameterized are $0$-dimensional,
whereas
in our case the ideals are  positive-dimensional.

We conclude our paper by outlining potential directions for further investigation on multigraded Hilbert schemes, motivated by both our theorem and its proof.

\section{Multigraded Hilbert schemes}\label{SectionMultigraded}

In this section, we introduce multigraded Hilbert schemes over an arbitrary base, slightly generalizing the original work of Haimain-Sturmfels \cite{HaimanSturmfels}, which dealt with commutative rings, and much of the later research, where the base is a field. 
This broader approach aligns with the literature on Grothendieck Hilbert schemes and offers more flexibility for constructing maps between different Hilbert schemes. 
As we will see in this paper, the expanded setup becomes important even when the main interest is in the case where the base is a field.

Let $Z$ be an arbitrary scheme, and
$\scF_1,\ldots, \scF_p$ be locally free sheaves on $Z$ of finite rank.
Let $\Omega$ be an abelian group, fix elements $\omega_1, \ldots, \omega_p \in \Omega$,
and consider the semigroup morphism $\deg: \N^p \to \Omega$ defined by $\deg(d_1, \ldots, d_p) = \sum_{i=1}^p d_i \omega_i$.

\begin{definition}\label{DefGrading}
We define an \textbf{$\Omega$-grading} on the $\scO_Z$-algebra 
$$
\scR=\Sym(\scF_1\oplus \cdots \oplus \scF_p) \simeq \Sym(\scF_1)\otimes \cdots \otimes \Sym(\scF_p)
$$
by setting  $\deg(\scF_i) = \omega_i$.
More precisely, there is a decomposition
$
\scR=\bigoplus_{\delta\in \Omega} [\scR]_\delta$,
where
$$
[\scR]_\delta := \bigoplus_{\deg(d_1, \ldots, d_p) =\delta} \Sym^{d_1}(\scF_1)\otimes \cdots \otimes \Sym^{d_p}(\scF_p)
$$
is the graded component of degree $\delta$.
The grading is called \textbf{positive} if $\deg^{-1}(0_\Omega) = \{0_{\N^p}\}$.
\end{definition}

Given a morphism $f:Z' \to Z$, the $\scO_{Z'}$-algebra  $\scR_{Z'}:= f^{\star}(\scR)$ has an induced graded decomposition. 
Indeed, since the pullback $f^{\star}$ commutes with direct sums, we have $[\scR_{Z'} ]_\delta :=f^{\star}([ \scR]_\delta)$

\begin{definition}\label{DefHomogeneousAdmissible}
A sheaf of ideals $\scI \subseteq \scR_{Z'} $ is \textbf{homogeneous} if it respects the graded  decomposition, that is, if 
$\scI=\bigoplus_{\delta\in \Omega} [\scI]_\delta$
where each $[\scI]_\delta \subseteq [\scR_{Z'} ]_\delta$ is a subsheaf. 
It is called \textbf{admissible} if,
for every $\delta \in \Omega$,
 the quotient $[\scR_{Z'}/\scI]_\delta :=[\scR_{Z'}]_\delta/[\scI]_\delta$ is a locally free sheaf on $Z'$ of constant finite rank.
 The \textbf{Hilbert function} of $\scR_{Z'}/\scI$, denoted  by $\HF_\scI: \Omega \to \N$,
 is defined by $\HF_\scI(\delta) := \rank_{Z'} [\scR_{Z'}/\scI]_\delta$.
 \end{definition}

\begin{remark}\label{RemarkAlgebraOverPoint}
When ${Z'}$ is a point of $Z$, say $z\in Z$, 
 the fiber
$\scF_{i,z}$ is a vector space over the residue field $\kappa(z)$,
$\scR_z$ is an $\Omega$-graded polynomial ring over $\kappa(z)$, 
and $\scI_z \subseteq \scR_z$ is a homogeneous ideal with Hilbert function equal to
$\HF_\scI$.
\end{remark}

\begin{definition}\label{DefHilbertFunctor}
Let $\mfh : \Omega \to \N$ be a function.
Let $\mathrm{Sch}_Z$ denote the category of schemes over $Z$, and $\mathrm{Set}$ the category of sets.
The {\bf Hilbert functor} is the functor
$
\underline{\mathrm{Hilb}}_Z^\mfh(\scR):\mathrm{Sch}_Z^\mathrm{op}\to\mathrm{Set}
$ 
defined by the assignment
$$
Z' \mapsto 
\big\{ \text{homogeneous sheaves of ideals } \scI \subseteq \scR_{Z'} \, : \, 
[\scR_{Z'}/\scI]_\delta \text{ is locally free   with rank }   \mfh(\delta)
\big\}.
$$
\end{definition}

\begin{thm}\label{ThmMultigradedHilbertSchemeOverZ}
The functor $\underline{\mathrm{Hilb}}_Z^\mfh(\scR)$ is represented by a 
 scheme, separated and locally of finite type over $Z$,
denoted by $\mcH_Z^\mfh(\scR)$.
If the grading is positive, 
this scheme is proper over $Z$.
\end{thm}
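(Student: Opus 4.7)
The plan is to reduce the statement to the original theorem of Haiman-Sturmfels \cite{HaimanSturmfels}, which is proved for multigraded polynomial rings over an arbitrary commutative ring, by a Zariski-local gluing argument on the base $Z$. First I would verify that $\underline{\mathrm{Hilb}}_Z^\mfh(\scR)$ is a sheaf in the Zariski topology on $\mathrm{Sch}_Z$: a homogeneous admissible ideal sheaf $\scI \subseteq \scR_{Z'}$ is determined by its restriction to any open cover of $Z'$, and local data glue by Zariski descent for quasi-coherent sheaves, while the admissibility condition (local freeness of graded quotients of prescribed rank) is also Zariski-local. Since representable functors themselves form a Zariski sheaf, it suffices to construct a representing scheme after restricting to an open cover of $Z$.

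Next, I would pick an affine open cover $\{U_\alpha = \mathrm{Spec}(A_\alpha)\}$ of $Z$ that simultaneously trivializes each locally free sheaf $\scF_i$. Over such a $U_\alpha$, the algebra $\scR|_{U_\alpha}$ is isomorphic to the standard $\Omega$-graded polynomial ring $A_\alpha[x_{i,j}]$ with $\deg(x_{i,j})=\omega_i$, so the theorem of Haiman-Sturmfels produces a scheme $H_\alpha$, separated and locally of finite type over $U_\alpha$, representing the Hilbert functor on all $U_\alpha$-schemes. On each overlap, two such local representing schemes represent the same restricted functor and so admit a unique canonical isomorphism, and these isomorphisms satisfy the cocycle condition on triple overlaps by the universal property. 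The $H_\alpha$ therefore glue to a $Z$-scheme $\mcH_Z^\mfh(\scR)$ representing $\underline{\mathrm{Hilb}}_Z^\mfh(\scR)$. Separatedness and locally-of-finite-type transfer immediately because both properties can be checked Zariski-locally on the target.

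For the properness claim in the positive-grading case, I would use that Haiman-Sturmfels additionally realize each $H_\alpha$ as a closed subscheme of a finite product of Grassmannian bundles over $U_\alpha$, built from a finite set of determining degrees (positivity is what guarantees that each $[\scR]_\delta$ has finite rank and that only finitely many degrees control the functor). In particular $H_\alpha \to U_\alpha$ is proper, and since properness is also local on the target, the glued morphism $\mcH_Z^\mfh(\scR) \to Z$ is proper.

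I expect the main technical obstacle to be the second step: verifying that the Haiman-Sturmfels construction genuinely represents the functor on all $A_\alpha$-schemes (not only affine ones) and that admissibility is stable under arbitrary base change, so that the transition isomorphisms exist and are canonical. These are functoriality statements, but they rely essentially on the local freeness of the $\scF_i$, which ensures that pullback respects the graded pieces and preserves the rank of their quotients; without local freeness the admissibility condition would not behave well under base change, and the gluing would fail.
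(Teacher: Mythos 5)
Your proof is correct and takes essentially the same approach as the paper: verify the Zariski sheaf property, apply the Haiman--Sturmfels theorem over an affine cover of $Z$, glue the resulting schemes (the paper invokes \cite[Lemma 26.15.4]{stacks} where you spell out the cocycle condition), and transfer separatedness, locally-of-finite-typeness, and properness since each is local on the target. Your extra care to refine the cover so that the $\scF_i$ are trivialized is in fact needed for $\scR|_{U_\alpha}$ to be a genuine polynomial ring over $A_\alpha$, a point the paper's proof leaves implicit; on the other hand, your worry about representability on non-affine $A_\alpha$-schemes is unfounded, since Haiman--Sturmfels already establish representability on the full category of $A_\alpha$-schemes.
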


The scheme $\mcH_Z^\mfh(\scR)$ is called the {\bf multigraded Hilbert scheme}.
When $Z$ is an affine scheme, say $Z = \mathrm{Spec}(A)$ for a ring $A$,
this is precisely the object constructed in \cite[Theorem 1.1]{HaimanSturmfels},
and we denote it $\mcH_A^\mfh(\scR)$ for simplicity.

\begin{proof}[Proof of  \Cref{ThmMultigradedHilbertSchemeOverZ}]
Recall that a functor $F:\mathrm{Sch}_Z^\mathrm{op}\to\mathrm{Set}$ is representable 
 if there is a $Z$-scheme $X$ and an isomorphism of functors $F \simeq \Hom_Z(-,X)$.
 The functor  $\underline{\mathrm{Hilb}}_Z^\mfh(\scR)$  clearly satisfies the sheaf property for the Zariski topology.
Let $\{\text{Spec}(A_i)\}_i$ be an affine cover of $Z$.
By \cite[Theorem 1.1]{HaimanSturmfels},  each subfunctor $\underline{\mathrm{Hilb}}_{A_i}^\mfh(\scR_{\text{Spec}(A_i)})$ is representable by a quasi-projective scheme $\mcH_{A_i}^\mfh(\scR_{\text{Spec}(A_i)})$ over $\text{Spec}(A_i)$. 
Then, $\underline{\mathrm{Hilb}}_Z^\mfh(\scR)$ is covered by open representable subfunctors 
and, by \cite[Lemma 26.15.4]{stacks}, it is representable    by a scheme $\mcH_Z^\mfh(\scR)$.
 Since $\mcH_Z^\mfh(\scR)$ is locally quasi-projective over $Z$, it is separated and locally of finite type over $Z$.
 If the grading is positive, then each   $\mcH_{A_i}^\mfh(\scR_{\text{Spec}(A_i)})$
 is projective over $A_i$ by \cite[Theorem 1.1]{HaimanSturmfels}.
It follows that $\mcH_Z^\mfh(\scR)$ is a locally projective $Z$-scheme and thus a proper $Z$-scheme by \cite[01WC]{stacks}.
\end{proof}

\begin{remark}[Base change]\label{RemarkBaseChange}
Let $Z'\to Z$ be a morphism of schemes. 
A standard argument shows that 
\begin{equation}\label{EqBaseChange}
\mcH_{Z'}^\mfh(\scR_{Z'}) \simeq \mcH_Z^\mfh(\scR) \times_Z Z',
\end{equation}
since they represent the same functor.
Equation \eqref{EqBaseChange} is  the base change formula for the multigraded Hilbert scheme.

Applying \eqref{EqBaseChange} when ${Z'}$ is a point of $Z$, say $z\in Z$, 
the fiber of the structure morphism 
$\mcH_Z^\mfh(\scR)\to Z$ over $z$ is naturally isomorphic to the multigraded Hilbert scheme
$\mcH_{\kappa(z)}^\mfh(\scR_z)$.
Thus, 
points of $\mcH_Z^\mfh(\scR)$ correspond bijectively to  homogeneous ideals
$I \subseteq \scR_z$ with Hilbert function  $\mfh$,
where $z \in Z$ is some point 
and $\scR_z$ is the corresponding fiber of $\scR$, cf.  \Cref{RemarkAlgebraOverPoint}.
\end{remark}

\section{Bigraded Hilbert schemes over a product of Grassmannians}\label{SectionBigradedGrassmannians}

In this section, we narrow the focus from the previous section and introduce the notation that will be used throughout the rest of the paper. The main goal of this section is to present several classes of Hilbert schemes and key morphisms between them, which will play a crucial role in the proof of the main theorem.

\begin{notation}[Bigraded polynomial rings]\label{NotationBigraded}
Let $\kk$ be an algebraically closed field.
A {\bf standard  $\Z^2$-graded $\kk$-algebra } is an algebra $A$ with a decomposition $A = \bigoplus_{\bfd\in \Z^2} [A]_\bfd$ where $[A]_{(0,0)} = \kk$ and $A$ is generated over $\kk$ by $[A]_{(1,0)}$ and $[A]_{(0,1)}$.
If $f\in [A]_\bfd$, we write $\deg(f) = \bfd$.
The grading of a standard  $\Z^2$-graded algebra is positive, cf.  \Cref{DefGrading}.

Given positive integers $m,n$, we consider the standard $\Z^2$-graded polynomial ring 
$$
S = \kk[x_1, \ldots, x_m, y_1, \ldots, y_n],
$$
where  $\deg(x_i) = (1,0)$ and $\deg(y_i) = (0,1)$.
We have $S = \Sym(\sfX \oplus \sfY) = \Sym(\sfX) \otimes \Sym(\sfY)$, where
$$
\sfX = [S]_{(1,0)} =	\mathrm{Span}_\kk( x_1, \ldots, x_m)
\qquad \text{and} \qquad
\sfY = [S]_{(0,1)} =	\mathrm{Span}_\kk( y_1, \ldots, y_n).
$$

Whenever a tensor products occurs over the base $\kk$, we simply   write $\otimes$ instead of $\otimes_\kk$.
\end{notation}

\begin{notation}[Grassmannians of subspaces]\label{NotationGrassmannians}
Let $0 < \al <m$ and $0 < \be <n$ be integers. 
Let
$$
\G_1= \mathrm{Gr}(\al,\sfX)
\qquad \text{and} \qquad\G_2=\mathrm{Gr}(\be,\sfY)$$
be the Grassmannians 
parametrizing subspaces of $\sfX$ and $\sfY$ of dimension $\al$ and $\be$, respectively,
and denote their product by
$$
\G = \G_1\times \G_2.
$$
On each $\G_i$ there is the tautological sequence of locally free sheaves
\begin{equation}\label{EqTautaologicalSeqGi}
0 \longrightarrow \scS_i \overset{\iota}{\longrightarrow} \scE_i \overset{\pi}{\longrightarrow} \scQ_i \longrightarrow 0
\end{equation}
where, for $\G_1$, $\scE_1 = \sfX \otimes  \scO_{\G_1}$ is the trivial 
sheaf, 
$\scS_1$ is the tautological  subbundle of rank $\al$,
and $\scQ_1$ is the tautological  quotient bundle of rank $m-\al$;
likewise for $\G_2$.
To simplify the notation, 
we drop the index $i = 1,2$ from the natural inclusion $\iota$  and projection $\pi$.
The $\kk$-points of  $\G$ are identified with the pairs $(\sfV,\sfW)$ of subspaces $\sfV \subseteq \sfX, \sfW\subseteq \sfY$ they parametrize.

Define the $\scO_\G$-algebras
$$
\scR = \Sym(\scE_1) \boxtimes \Sym(\scE_2),
\quad
\scR' = \Sym(\scS_1) \boxtimes \Sym(\scQ_2)
\quad \text{ and } \quad
\scR'' = \Sym(\scQ_1) \boxtimes \Sym(\scS_2),
$$
where $\boxtimes$ denotes a tensor product of pullbacks of sheaves on each factor of $\G$.
These algebras are also  standard $\Z^2$-graded.
Observe that, since $\scE_1, \scE_2$ are trivial sheaves, 
we have $\scR = S \otimes \scO_\G$.
\end{notation}

We will be interested in the multigraded Hilbert schemes 
of these $\scO_\G$-algebras.
For example, 
given a function $\mfh':\Z^2 \to \N$,
one can consider $\mcH_\G^{\mfh'}(\scR')$,
whose $\kk$-points are identified with triples $(I',\sfV,\sfW)$ where
$(V,W)$ is a $\kk$-point of $\G$ 
and $I' \subseteq \Sym(\sfV) \otimes \Sym(\sfY/\sfW)$ is a homogeneous ideal with $\HF_{I'}=\mfh'$.
Similarly, the $\kk$-points of $\mcH_\G^{\mfh''}(\scR'')$ are  triples
$(I'',\sfV,\sfW)$ with 
$I'' \subseteq \Sym(\sfX/\sfV) \otimes \Sym(\sfW)$.
The $\kk$-points of the fiber product 
$\mcH_\G^{\mfh'}(\scR') \times_{\G} \mcH_\G^{\mfh''}(\scR'')$
are   tuples  $(I',I'',\sfV,\sfW)$, with $ I',I'',\sfV,\sfW$ as above.
Finally, the $\kk$-points of $\mcH_\G^{\mfh}(\scR)$ are  triples $(I,\sfV,\sfW)$
where $I\subseteq S$ is a homogeneous ideal with Hilbert function $\HF_I = \mfh$.
We informally say that these Hilbert scheme parametrize the ideals $I', I'', I$, etc. 

Fix  $(\sfV,\sfW) \in \G$ and consider the diagram 
\begin{center}
\begin{tikzcd}
\Sym(\sfV) \otimes \Sym(\sfY/\sfW)  \arrow[d,hook,"\iota"]
& & \Sym(\sfX/\sfV) \otimes \Sym(\sfW) \arrow[d,hook,"\iota"]
\\
\Sym(\sfX) \otimes \Sym(\sfY/\sfW) & 
\arrow[l,swap,"\pi"]
S = \Sym(\sfX)\otimes \Sym(\sfY)
\arrow[r,"\pi"]
& 
\Sym(\sfX/\sfV) \otimes \Sym(\sfY)
\end{tikzcd}
\end{center}
where, again by abuse of notation, we use $\iota $ and $\pi$ to denote the natural maps induced from the inclusions and projections in the tautological sequences \eqref{EqTautaologicalSeqGi}.

\begin{definition}\label{DefConcentratedPositiveDegrees}
Let $A$ be a standard $\Z^2$-graded  $\kk$-algebra
and $I\subseteq A$ be a homogeneous ideal. 
We say that $I$ is \textbf{concentrated in positive degrees} if
$[I]_{(d_1,d_2)} = 0$ whenever $d_1 = 0$ or $d_2 = 0$.
\end{definition}

\begin{lemma}\label{LemmaMapIntersection}
Suppose $\mcH_\G^{\mfh'}(\scR')$ and $\mcH_\G^{\mfh''}(\scR'')$  parametrize ideals concentrated in positive degrees.
 There is a  morphism of $\G$-schemes,
$$
 \mcH_\G^{\mfh'}(\scR') \times_{\G} \mcH_\G^{\mfh''}(\scR'') \to 
\mcH_\G^{\mfh}(\scR),
$$
which, at the level of $\kk$-points, maps $
\big(I',I'',\sfV,\sfW\big) \mapsto \big(I,\sfV,\sfW\big),
$
where
$$
I =\pi^{-1}(\iota(I')) \cap \pi^{-1}(\iota(I'')) \subseteq S
$$ 
and, moreover,
 $\mfh: \Z^2 \to \N$ is a function determined  uniquely  from $\mfh',\mfh''$ by the formula above.
\end{lemma}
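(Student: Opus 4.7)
By Yoneda's lemma and the representability of the multigraded Hilbert functors (\Cref{ThmMultigradedHilbertSchemeOverZ}), it suffices to construct a natural transformation of functors of points: for every $\G$-scheme $T$ and every pair of admissible homogeneous sheaves of ideals $\scI'\subseteq\scR'_T$, $\scI''\subseteq\scR''_T$ (with Hilbert functions $\mfh'$, $\mfh''$ concentrated in positive degrees), one produces an admissible homogeneous sheaf of ideals $\scI\subseteq\scR_T$ whose Hilbert function $\mfh$ depends only on $\mfh'$ and $\mfh''$. Since every operation in the construction commutes with pullback, naturality in $T$ is automatic, and this yields the sought morphism of $\G$-schemes.

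Pulling back the tautological sequences \eqref{EqTautaologicalSeqGi} along $T\to\G$ produces short exact sequences of locally free $\scO_T$-modules that split locally on $T$. Taking symmetric powers and tensoring, the diagram preceding the statement becomes, on $T$, a diagram of $\scO_T$-algebras involving an injection $\iota'\colon\scR'_T\hookrightarrow\mathcal{M}':=\Sym(\sfX_T)\otimes_{\scO_T}\Sym(\scQ_{2,T})$ and a surjection $\pi'\colon\scR_T\twoheadrightarrow\mathcal{M}'$ with $\ker\pi'=\scS_{2,T}\cdot\scR_T$; analogously $\iota''\colon\scR''_T\hookrightarrow\mathcal{M}''$ and $\pi''\colon\scR_T\twoheadrightarrow\mathcal{M}''$ with $\mathcal{M}'':=\Sym(\scQ_{1,T})\otimes_{\scO_T}\Sym(\sfY_T)$ and $\ker\pi''=\scS_{1,T}\cdot\scR_T$. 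I then set
\begin{equation*}
\mathcal{J}':=(\pi')^{-1}\bigl(\iota'(\scI')\cdot\mathcal{M}'\bigr),\qquad \mathcal{J}'':=(\pi'')^{-1}\bigl(\iota''(\scI'')\cdot\mathcal{M}''\bigr),\qquad \scI:=\mathcal{J}'\cap\mathcal{J}'',
\end{equation*}
all sheaves of homogeneous ideals in $\scR_T$; at a $\kk$-point $(\sfV,\sfW)\in\G$ this recovers the formula in the statement.

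The heart of the proof is admissibility: $[\scR_T/\scI]_\delta$ must be locally free of constant rank in every bidegree $\delta\in\Z^2$. I would deduce this from the short exact sequence
\begin{equation*}
0\longrightarrow\scR_T/\scI\longrightarrow \scR_T/\mathcal{J}'\oplus\scR_T/\mathcal{J}''\longrightarrow\scR_T/(\mathcal{J}'+\mathcal{J}'')\longrightarrow 0.
\end{equation*}
A local splitting $\sfX_T\cong\scS_{1,T}\oplus\scQ_{1,T}$ identifies $\mathcal{M}'\cong\scR'_T\otimes_{\scO_T}\Sym(\scQ_{1,T})$ as an $\scR'_T$-algebra, giving $\scR_T/\mathcal{J}'\cong(\scR'_T/\scI')\otimes_{\scO_T}\Sym(\scQ_{1,T})$ locally on $T$, which is locally free of constant rank determined by $\mfh'$; similarly for $\scR_T/\mathcal{J}''$ with $\mfh''$. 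The crucial step, which I see as the main obstacle, is the identity
\begin{equation*}
\mathcal{J}'+\mathcal{J}''\;=\;\scS_{1,T}\cdot\scR_T+\scS_{2,T}\cdot\scR_T,
\end{equation*}
whose right-hand side has quotient $\Sym(\scQ_{1,T})\otimes_{\scO_T}\Sym(\scQ_{2,T})$, again locally free of constant rank. The containment $\supseteq$ is automatic from $\mathcal{J}'\supseteq\ker\pi'$ and its symmetric counterpart; for $\subseteq$, the hypothesis that $\scI'$ is concentrated in positive degrees forces $\iota'(\scI')\subseteq\scS_{1,T}\cdot\mathcal{M}'$ (every generator has strictly positive degree in the $\scS_{1,T}$-direction), whence $\mathcal{J}'\subseteq(\pi')^{-1}(\scS_{1,T}\cdot\mathcal{M}')=\scS_{1,T}\cdot\scR_T+\scS_{2,T}\cdot\scR_T$, and symmetrically for $\mathcal{J}''$. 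With this identity in hand, the displayed exact sequence exhibits $[\scR_T/\scI]_\delta$ as the kernel of a surjection of locally free sheaves of constant rank, hence itself locally free of rank equal to the corresponding alternating sum; this formula determines $\mfh$ purely in terms of $\mfh'$, $\mfh''$, and the fixed data $m,n,\al,\be$.
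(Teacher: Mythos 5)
Your proof is correct and follows essentially the same strategy as the paper: both reduce the Hilbert function of the intersection to those of $\mathcal{J}'$, $\mathcal{J}''$, and $\mathcal{J}'+\mathcal{J}''$ via the Mayer-Vietoris sequence, both use a local splitting of the tautological sequences to identify $\scR_T/\mathcal{J}'$ as a free module over $\scR'_T/\scI'$ with Hilbert function controlled by $\mfh'$, and both exploit the positive-degree hypothesis to pin down $\mathcal{J}'+\mathcal{J}''$ as the ideal generated by $\scS_{1,T}$ and $\scS_{2,T}$. The only stylistic difference is that the paper first carries out the computation at a $\kk$-point and then observes that the same arguments sheafify, whereas you work directly on the functor of points over an arbitrary $T$; you are also a bit more explicit that $\iota'(\scI')$ is to be read as the extended ideal $\iota'(\scI')\cdot\mathcal{M}'$, which the paper leaves implicit in its abuse of notation.
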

\begin{proof}
We first argue at the level of $\kk$-points.
Fix a $\kk$-point 
$$(I',I'',\sfV,\sfW) \in  \mcH_\G^{\mfh'}(\scR') \times_{\G} \mcH_\G^{\mfh''}(\scR'')$$
and
denote  $I_1 :=\pi^{-1}(\iota(I'))\subseteq S$ and $I_2 := \pi^{-1}(\iota(I''))\subseteq S$.

First, we observe that the Hilbert functions of $I_1, I_2$ are uniquely determined by 
$\mfh'$ and $\mfh''$.
Letting $\sfX = \sfV\oplus \sfV'$ for some  subspace $\sfV'\subseteq \sfX$,
we have $\Sym(\sfX) \simeq \Sym(\sfV)\otimes \Sym(\sfV')$
and 
\begin{equation}\label{EqSplitExtensionI}
\frac{\Sym(\sfX)\otimes  \Sym(\sfY/\sfW) }{\iota(I')}\simeq \frac{\Sym(\sfV)\otimes  \Sym(\sfY/\sfW)}{I'}\otimes \Sym(\sfV'),
\end{equation}
thus,  the Hilbert function of $\iota(I')$ is uniquely determined by $\mfh'=\HF_{I'}$.
Taking preimage under $\pi$ does not change the quotient,
so the Hilbert function of $\iota(I')$  and $\pi^{-1}(\iota(I'))=I_1$ coincide.
Likewise for $I_2$.

Since $I'  \subseteq \Sym(\sfV) \otimes \Sym(\sfY/\sfW)$ is concentrated in positive degrees, 
it is contained in the ideal $ (\sfV)(\sfY/\sfW)$, that is, the ideal generated by $[\Sym(\sfV)\otimes  \Sym(\sfY/\sfW)]_{(1,1)}$.
In particular, we have $I' \subseteq (\sfV)$.
It follows that $\iota(I') \subseteq (\sfV)$ and $(\sfW) \subseteq I_1 = \pi^{-1}(\iota(I'))\subseteq (\sfV)+(\sfW)$.
Likewise, we have
$(\sfV) \subseteq I_2 \subseteq (\sfV)+(\sfW)$, 
and therefore $I_1 + I_2= (V) + (W)$, so its Hilbert function is independent of $I',I''$.
From the exact sequence
\begin{equation}\label{EqExactSequenceIntersectionI}
0 \to \frac{S}{I_1 \cap I_2} \to \frac{S}{I_1} \oplus \frac{S}{I_2} \to \frac{S}{I_1 + I_2} \to 0
\end{equation}
we deduce that the Hilbert function $\mfh:=\HF_{I_1\cap I_2}$
depends only on $\mfh'$ and $\mfh''$.

These arguments can be reproduced  for the more general points.
Let  $f: Z\to \G$ be a $\G$-scheme, and let $\scI'\subseteq \scR'_Z$ and $\scI''\subseteq \scR''_Z$ be admissible homogeneous sheaves of ideals with Hilbert functions $\mfh'$ and $\mfh''$, respectively.
Consider the diagram of $\scO_Z$-algebras induced from the inclusions and projections in the tautological sequences \eqref{EqTautaologicalSeqGi}
\begin{center}
\begin{tikzcd}
\scR'_Z
 \arrow[d,hook,"\iota"]
& & 
\scR''_Z
\arrow[d,hook,"\iota"]
\\
f^\star (\Sym(\scE_1) \boxtimes \Sym(\scQ_2) )
& 
\arrow[l,swap,"\pi"]
\scR_Z
\arrow[r,"\pi"]
& 
f^\star (
\Sym(\scQ_1) \boxtimes \Sym(\scE_2) )
\end{tikzcd}
\end{center}
and define  $\scI_1 := \pi^{-1}(\iota(\scI')) \subseteq\scR_Z$ and $\scI_2=\pi^{-1}(\iota(\scI''))\subseteq\scR_Z$.
Arguing locally on $Z$, 
it follows as in   \eqref{EqSplitExtensionI} that $\scI_1$ and $\scI_2$ are  admissible homogeneous sheaves of ideals with $\HF_{\scI_1}=\HF_{I_1}$ and  $\HF_{\scI_2}=\HF_{I_2}$,
and then it follows as in \eqref{EqExactSequenceIntersectionI}
that  $\scI_1 \cap \scI_2$ is also admissible homogeneous 
with $\HF_{\scI}=\mfh$.
Thus, the assignment $(\scI_1, \scI_2) \to \scI$ defines a natural transformation of  the functors represented by  
$\mcH_\G^{\mfh'}(\scR') \times_{\G} \mcH_\G^{\mfh''}(\scR'') $ and $
\mcH_\G^{\mfh}(\scR)$, 
and, hence,
the desired morphism of $\G$-schemes.
\end{proof}

Applying the base change formula \eqref{EqBaseChange} to  $\scR = S \otimes \scO_\G$,
we see that 
$ \mcH_\G^{\mfh}(\scR)  = \mcH_\kk^{\mfh}(S )\times_\kk \G.
$
So, there is a projection map of $\kk$-schemes $\mathrm{pr} : \mcH_\G^{\mfh}(\scR)  \to \mcH_\kk^{\mfh}(S )$.

\begin{prop}\label{PropProjectionFinite}
Assume that every ideal parametrized by  $\mcH_\G^{\mfh'}(\scR')$ and $\mcH_\G^{\mfh''}(\scR'')$ 
is radical and  concentrated in positive degrees.
Let $g :  \mcH_\G^{\mfh'}(\scR') \times_{\G} \mcH_\G^{\mfh''}(\scR'') 	\to \mcH_\kk^{\mfh}(S )$ 
denote the composition of the morphism of  \Cref{LemmaMapIntersection}
and 
$\mathrm{pr} : \mcH_\G^{\mfh}(\scR)  \to \mcH_\kk^{\mfh}(S )$.
Then, $g$
is a finite  morphism of $\kk$-schemes.
\end{prop}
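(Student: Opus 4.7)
The strategy is to verify that $g$ is proper and that its fibers over closed points are finite sets, from which the conclusion follows by the standard criterion that a proper morphism of finite type between Noetherian schemes with finite fibers is finite.

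For properness, first I would establish that the source of $g$ is proper over $\kk$. By \Cref{ThmMultigradedHilbertSchemeOverZ}, the positivity of the standard $\Z^2$-gradings on $\scR'$ and $\scR''$ ensures that $\mcH_\G^{\mfh'}(\scR')$ and $\mcH_\G^{\mfh''}(\scR'')$ are both proper over $\G$; their fiber product over $\G$ remains proper over $\G$, and since $\G = \G_1 \times \G_2$ is a product of Grassmannians, it is proper over $\kk$. Hence the source is proper over $\kk$, while the target $\mcH_\kk^\mfh(S)$ is separated over $\kk$ by the same theorem, and a morphism from a proper to a separated $\kk$-scheme is automatically proper.

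For finite fibers, I would fix a closed point $I \in \mcH_\kk^\mfh(S)$ and analyze the $\kk$-points of $g^{-1}(I)$. Such a point is a tuple $(I', I'', \sfV, \sfW)$ with $I = I_1 \cap I_2$, where $I_1 := \pi^{-1}(\iota(I'))$ and $I_2 := \pi^{-1}(\iota(I''))$. The radicality assumption on $I'$ and $I''$, combined with $\kk$ being perfect, yields via the splitting \eqref{EqSplitExtensionI} that $I_1$ and $I_2$ are themselves radical, and consequently so is $I$. The key observation is that the pair $(I_1, I_2)$ determines the entire tuple: from the arguments in the proof of \Cref{LemmaMapIntersection} (and the fact that $I', I''$ are concentrated in positive degrees) one reads off $\sfW = [I_1]_{(0,1)}$ and $\sfV = [I_2]_{(1,0)}$, and then $I'$ and $I''$ are recovered as $\iota^{-1}(\pi(I_1))$ and $\iota^{-1}(\pi(I_2))$, respectively.

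It therefore suffices to prove that only finitely many admissible pairs $(I_1, I_2)$ can arise. Since $I$ is radical and $S$ is Noetherian, $V(I)$ decomposes into finitely many irreducible components $Z_1, \dots, Z_k$. From the inclusion $(\sfW) \subseteq I_1 \subseteq (\sfV) + (\sfW)$ we obtain $L := V((\sfV)+(\sfW)) \subseteq V(I_1)$, and combined with the identity $V(I_1) \cap V(I_2) = L$ and the irreducibility of each $Z_i$, one then deduces that every irreducible component of $V(I_1)$ is either one of the $Z_i$'s or equals $L$ itself, and likewise for $V(I_2)$. This leaves only finitely many combinatorial types for the pair $(V(I_1), V(I_2))$, and the fixed Hilbert functions $\mfh', \mfh''$ together with the recovery formulas for $\sfV, \sfW$ should cut the $(\sfV, \sfW) \in \G$ compatible with each combinatorial type down to a finite set. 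The main obstacle will be this last finiteness assertion: verifying that for each combinatorial type only finitely many $(\sfV, \sfW)$ are admissible requires using the radicality hypothesis together with the explicit product structure of $L$ to rule out positive-dimensional families.
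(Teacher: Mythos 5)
Your strategy mirrors the paper's exactly: show $g$ is proper, then show it is quasi-finite by bounding the fiber over a closed point $I$. The properness step and the reduction to counting pairs $(I_1,I_2)$ — via the recovery $\sfW=[I_1]_{(0,1)}$, $\sfV=[I_2]_{(1,0)}$, and then $I'=\iota^{-1}\pi(I_1)$, $I''=\iota^{-1}\pi(I_2)$ — are both correct and match the paper.

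The worry you raise in your last sentence is genuine, and it is the crux of the matter. Showing that every irreducible component of $V(I_1)$ is either one of the $Z_i$ or equals $L=V((\sfV)+(\sfW))$ does not by itself bound the pairs $(I_1,I_2)$, because $L$ (equivalently the prime $Q=(\sfV)+(\sfW)=I_1+I_2$) moves with the tuple. The paper's own published argument makes the same leap: after proving $\mathrm{Min}(I_1),\mathrm{Min}(I_2)\subseteq\mathrm{Min}(I)\cup\{Q\}$ it declares "finitely many possibilities" without explaining why the extra prime $Q$ is constrained. In fact, with only the stated hypotheses the conclusion fails. Take $\mfh'$ to be the Hilbert function of the ideal $(\sfV)(\sfY/\sfW)\subseteq\Sym(\sfV)\otimes\Sym(\sfY/\sfW)$ generated by the whole degree-$(1,1)$ component, and $\mfh''$ to be the Hilbert function of the zero ideal of $\Sym(\sfX/\sfV)\otimes\Sym(\sfW)$; both Hilbert schemes parametrize only radical ideals concentrated in positive degrees, so the hypotheses hold. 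Then $I_1=\big((\sfV)+(\sfW)\big)\cap(\sfY)$, $I_2=(\sfV)$, and $I_1\cap I_2=(\sfV)\cap(\sfY)$ is independent of $\sfW$; the $g$-fiber over this point therefore contains a copy of $\G_2=\mathrm{Gr}(\beta,\sfY)$, so $g$ is not quasi-finite. To make the argument go through one needs the additional (and in the paper's application, available) fact that $Q$ can never occur as a minimal prime of $I_1$ or of $I_2$; in the cutting process this is forced because $a_{q-1}<a_q$ and $b_{q+1}<b_q$, so $(\sfV)$ is not a minimal prime of $J_{\mcA'}$ and $(\sfW)$ is not one of $J_{\mcA''}$. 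Your instinct to use "the explicit product structure of $L$" is along the right lines, but without an extra hypothesis ruling out $Q$ as a minimal prime, the finiteness assertion cannot be salvaged.
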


\begin{proof}
By \Cref{ThmMultigradedHilbertSchemeOverZ}, all the schemes involved in the statement are proper  over $\kk$, and so $g$ is a proper morphism
by \cite[Lemma 29.41.7]{stacks}.
In order to conclude that $g$ is finite, it suffices to show that it is quasi-finite by \cite[Lemma 30.21.1]{stacks},
and for this it suffices to show that the fiber over a closed point of  $\mcH_\kk^{\mfh}(S)$  is a finite set. 

We follow notation in the proof of \Cref{LemmaMapIntersection}.
A closed point of  $\mcH_\kk^{\mfh}(S)$  is identified with an ideal $I\subseteq S$.
The fiber is the set of tuples $(I',I'',\sfV,\sfW)\in  \mcH_\G^{\mfh'}(\scR') \times_{\G} \mcH_\G^{\mfh''}(\scR'') $ such that $I = I_1 \cap I_2$.

First, we show that there are finitely many pairs $(I_1,I_2)$ such that $I = I_1 \cap I_2$.
Since $I',I''$ are radical ideals, so are $I_1$ and $I_2$,
and therefore also  $I = I_1 \cap I_2$.
Let $\mathrm{Min}(\cdot)$ denote the set of minimal primes of an ideal.
Since $I = I_1 \cap I_2$,  the set $\mathrm{Min}(I)$ consists of the minimal elements of
$\mathrm{Min}(I_1) \cup \mathrm{Min}(I_2)$.
Suppose there exists $P\in \mathrm{Min}(I_1) \setminus \mathrm{Min}(I)$.
Then, $P$ contains a minimal prime of $I_2$, so $P \supseteq I_2$ and 
thus $P \supseteq I_1+I_2 = (\sfV)+(\sfW) =: Q$.
However, $Q$ is prime and contains $I_1$, so $P = Q$.
The argument shows that 
$\mathrm{Min}(I_1) \subseteq \mathrm{Min}(I) \cup \{Q\}$, and 
similarly we obtain that
$\mathrm{Min}(I_2) \subseteq \mathrm{Min}(I) \cup \{Q\}$.
Since $I_1, I_2$ are radical, they are determined by their sets of minimal primes, and we conclude that there are finitely many possibilities for $I_1$ and $I_2$.

Next, we observe that $\sfV,\sfW$ are uniquely determined by $I_1,I_2$.
Since $I',I''$ are concentrated in positive degree, we have
$[I']_{(0,1)}=[I'']_{(1,0)}=0$,
so
 $[I_1]_{(0,1)}=\sfW$ and 
$[I_2]_{(1,0)}=\sfV$.

Finally, $I'$ and $ I''$ are determined by $I_1,I_2,\sfV,\sfW$, and the proof is concluded.
\end{proof}

The final goal of this section is showing that, 
for the question of smoothness of the multigraded Hilbert scheme, 
one can restrict to ideals concentrated in positive degrees without loss of generality.

\begin{prop}\label{PropSmoothConcentratedPositiveDegrees}
Let $I\subseteq S$ be a homogeneous ideal, and $\mfh = \HF_I$.
Denote $ \al = \dim_\kk [I]_{(1,0)}$, $\be = \dim_\kk[I]_{(0,1)}$ and $\overline{S}=\kk[x_1,\ldots, x_{m-\al}, y_1, \ldots, y_{n-\be}]$.
If $\mcH_\kk^{{\mfh}}(\overline{S})$ is smooth,
then $\mcH_\kk^\mfh(S)$ is smooth.
\end{prop}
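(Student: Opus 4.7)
The plan is to realize $\mcH_\kk^\mfh(S)$ as a Zariski-locally trivial fibration over $\G = \G_1 \times \G_2$ with fiber $\mcH_\kk^\mfh(\overline{S})$, and then deduce smoothness from smoothness of the fiber and base.

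First I would construct a structure map $\psi : \mcH_\kk^\mfh(S) \to \G$ recording the linear forms of an ideal. Since $\mfh(1,0) = m-\al$ and $\mfh(0,1) = n-\be$ are constant across the family, admissibility forces the graded pieces of the universal ideal $\mathscr{J}$ on $\mcH_\kk^\mfh(S)$ to be locally free subsheaves $[\mathscr{J}]_{(1,0)} \subseteq \sfX \otimes \scO$ and $[\mathscr{J}]_{(0,1)} \subseteq \sfY \otimes \scO$ of ranks $\al$ and $\be$, and by the universal property of the Grassmannian they produce $\psi$. Combined with the base change identification $\mcH_\G^\mfh(\scR) \cong \mcH_\kk^\mfh(S) \times_\kk \G$ (valid because $\scR = S \otimes \scO_\G$), the graph of $\psi$ embeds $\mcH_\kk^\mfh(S)$ as a closed subscheme $Y \subseteq \mcH_\G^\mfh(\scR)$, functorially characterized as the locus where the tautological subbundles $\scS_1, \scS_2$ agree with the $(1,0)$- and $(0,1)$-graded pieces of the universal ideal.

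Next I would identify $Y$ with $\mcH_\G^\mfh(\overline{\scR})$, where $\overline{\scR} := \Sym(\scQ_1) \boxtimes \Sym(\scQ_2)$. The canonical surjection $\scR \twoheadrightarrow \overline{\scR}$ induced by $\scE_i \twoheadrightarrow \scQ_i$ has kernel equal to the ideal generated by $\scS_1$ in degree $(1,0)$ and $\scS_2$ in degree $(0,1)$; this can be checked Zariski-locally on $\G$ using splittings of the tautological sequences $\scE_i \cong \scS_i \oplus \scQ_i$. By the functorial fourth isomorphism theorem, for any $\G$-scheme $T$, admissible homogeneous ideals $\scI \subseteq \scR_T$ containing $(\scS_1)_T + (\scS_2)_T$ correspond bijectively to admissible homogeneous ideals $\overline{\scI} \subseteq \overline{\scR}_T$ via $\overline{\scI} := \scI / ((\scS_1)_T + (\scS_2)_T)$; admissibility and the Hilbert function $\mfh$ are preserved because $\scR_T/\scI \cong \overline{\scR}_T/\overline{\scI}$. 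This identifies the functors (and hence $\G$-schemes) $Y \cong \mcH_\G^\mfh(\overline{\scR})$.

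Finally I would show $\mcH_\G^\mfh(\overline{\scR}) \to \G$ is smooth via local triviality. The sheaves $\scQ_1, \scQ_2$ are locally free on $\G$, so on any open $U \subseteq \G$ where both trivialize, $\overline{\scR}|_U \cong \overline{S} \otimes \scO_U$, and the base change formula \eqref{EqBaseChange} yields
$$\mcH_U^\mfh(\overline{\scR}|_U) \cong \mcH_\kk^\mfh(\overline{S}) \times_\kk U.$$
By hypothesis $\mcH_\kk^\mfh(\overline{S})$ is smooth over $\kk$, and $U$ is smooth (open in the smooth scheme $\G$), so this local model is smooth. Smoothness being Zariski-local, $\mcH_\G^\mfh(\overline{\scR})$ is smooth over $\kk$, and therefore so is $\mcH_\kk^\mfh(S) \cong Y$. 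The hard part will be the functorial identification $Y \cong \mcH_\G^\mfh(\overline{\scR})$: one must verify globally over $\G$ (and in families over arbitrary $\G$-schemes $T$) that the canonical map $\scR \to \overline{\scR}$ has kernel exactly the ideal generated by $\scS_1, \scS_2$ and that the induced bijection of ideal sheaves preserves admissibility and the Hilbert function, despite the tautological splittings existing only Zariski-locally. Once this is in place, smoothness drops out formally from the trivialization of $\scQ_1, \scQ_2$ and base change.
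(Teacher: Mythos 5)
Your proposal is correct, and the first half (the isomorphism $\mcH_\kk^\mfh(S) \simeq \mcH_\G^\mfh(\overline{\scR})$) is essentially the paper's construction, packaged via the graph of the tautological map to $\G$ instead of via explicitly written mutually inverse morphisms. The genuine difference is in the second half, where you deduce smoothness. The paper argues that the structure morphism $f : \mcH_\G^\mfh(\overline{\scR}) \to \G$ is $\GL$-equivariant for the transitive action of $\GL = \GL(\sfX)\times\GL(\sfY)$ on $\G$, cites \cite[Proposition 1.65(a)]{Milne} to conclude that $f$ is flat, identifies each fiber with $\mcH_\kk^\mfh(\overline{S})$ by base change, and then uses flat $+$ smooth fibers $\Rightarrow$ smooth morphism, together with smoothness of $\G$. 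Your argument instead observes that $\scQ_1,\scQ_2$ trivialize on a Zariski open $U\subseteq\G$, so $\overline{\scR}|_U \cong \overline{S}\otimes\scO_U$ and the base change formula \eqref{EqBaseChange} gives $\mcH_\G^\mfh(\overline{\scR})\times_\G U \cong \mcH_\kk^\mfh(\overline{S})\times_\kk U$. This is more elementary: it bypasses the equivariance/flatness machinery entirely, exhibits $f$ as a Zariski-locally trivial fibration (which is a priori stronger than merely smooth), and requires only that smoothness be a Zariski-local property. The one point that deserves care, and which you flag yourself, is checking that the kernel of $\scR_T\twoheadrightarrow\overline{\scR}_T$ over an arbitrary $\G$-scheme $T$ is precisely the ideal sheaf generated by $(\scS_1)_T$ and $(\scS_2)_T$; this does follow from a Zariski-local splitting of the tautological sequence \eqref{EqTautaologicalSeqGi}, as you indicate, and parallels what the paper implicitly does when it checks the morphism at $\kk$-points and reduces to the affine case.
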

\begin{proof}
Consider the $\scO_\G$-algebra $\overline{\scR} = \Sym(\scQ_1) \boxtimes \Sym(\scQ_2)$.
The Hilbert scheme 
$\mcH_\G^{{\mfh}}(\overline{\scR})$ parametrizes triples $(I,\sfV,\sfW)$
where $I \subseteq \Sym(\sfX/\sfV) \otimes \Sym(\sfY/\sfW)$ and $\HF_I = \mfh$.

Proceeding as in \Cref{LemmaMapIntersection}, 
there is a morphism of $\G$-schemes  $\mcH_\G^{{\mfh}}(\overline{\scR}) \to \mcH^{{\mfh}}_\G(\scR)$
which, at the level of $\kk$-points, maps $(I,\sfV,\sfW)$ to $(\pi^{-1}(I),\sfV,\sfW)$,
where $\pi$ denotes the projection $S \to \Sym(\sfX/\sfV) \otimes \Sym(\sfY/\sfW)$ induced from the tautological sequences \eqref{EqTautaologicalSeqGi}.
As before, 
taking preimage under $\pi$ does not change the quotient, so
$\HF_{\pi^{-1}(I)}=\mfh$.
We then compose this morphism with the projection $\mathrm{pr} : \mcH_\G^{\mfh}(\scR)  \to \mcH_\kk^{\mfh}(S )$ and obtain a morphism of $\kk$-schemes $ \mcH^{{\mfh}}_\G(\overline{\scR}) \to \mcH_\kk^{{\mfh}}(S)$.
We can also construct a morphism  $\mcH_\kk^{{\mfh}}(S) \to \mcH^{{\mfh}}_\G(\overline{\scR})$
sending $$I\mapsto \big(I/([I]_{(1,0)}+[I]_{(0,1)}),[I]_{(1,0)},[I]_{(0,1)}\big).$$
These morphisms are inverse to each other, 
so 
$\mcH_\kk^{{\mfh}}(S) \simeq \mcH^{{\mfh}}_\G(\overline{\scR})$ as $\kk$-schemes.

The algebraic group $\GL = \GL(\sfX) \times \GL(\sfY)$ acts transitively on the $\kk$-points of $\G$,
and this action induces an action on $\overline{\scR}$ and on $\mcH^{{\mfh}}_\G(\overline{\scR})$.
The structure morphism 
$f: \mcH^{{\mfh}}_\G(\overline{\scR}) \to \G$ is clearly $\GL$-equivariant.
Since $\G$ is smooth, it follows that $f$ is flat \cite[Proposition 1.65 (a)]{Milne}.
The fiber of  $f$
 over a closed point $(\sfV,\sfW)$ is 
$ \mcH_\kk^{{\mfh}}(\Sym(\sfX/\sfV) \otimes \Sym(\sfY/\sfW))$
by  \Cref{RemarkBaseChange},
and this scheme is isomorphic to  $\mcH_\kk^{{\mfh}}(\overline{S})$.
If we assume that
 $\mcH_\kk^{{\mfh}}(\overline{S})$ is smooth,
it follows that  $f$ is a smooth morphism,
and thus the total space $\mcH_\kk^{{\mfh}}(S) \simeq \mcH^{{\mfh}}_\G(\overline{\scR})$ is also smooth.
\end{proof}

\section{Cartwright-Sturmfels ideals}\label{SectionCSIdeals}

In this section, we begin by recalling key definitions and results concerning bigraded polynomial rings and Cartwright-Sturmfels ideals, with further details found in \cite{CDNG22}. Following this, we establish a combinatorial framework to study the unique Borel-fixed point of a Cartwright-Sturmfels Hilbert scheme.

The group of graded $\kk$-algebra automorphisms of $S$ is 
$\GL =  \GL(\sfX)\times \GL(\sfY)$.
Its \textbf{Borel subgroup} is $ \mathrm{B}(\sfX)\times \mathrm{B}(\sfY)$,
where $\mathrm{B}(\sfX)\subseteq \GL(\sfX), \mathrm{B}(\sfY)\subseteq \GL(\sfY)$
are the subgroups of automorphisms represented by upper triangular matrices in the fixed bases.
An ideal $I\subseteq S$ is \textbf{Borel-fixed} if it is fixed by the action of the Borel subgroup.
Borel-fixed ideals are monomial ideals, but the converse is not true.
Whether a given monomial ideal is Borel-fixed or not usually depends (also) on the characteristic of $\kk$; 
however, for {radical} monomial ideals there is a combinatorial characterization of the Borel-fixed property.
Recall that a monomial ideal is radical if and only if it is  generated by square-free monomials.
A radical monomial ideal $J \subseteq S$ is Borel-fixed if and only if the following \textbf{exchange properties} hold:
\begin{equation}\label{EqExchangeProperty}
x_h u \in J 
\text{ whenever } 
x_iu \in J, 
\,\,
 h< i,
\qquad
\text{ and }
\qquad
y_h u \in J 
\text{ whenever } 
y_iu \in J, 
\,\,
 h< i.
\end{equation}
It follows that radical Borel-fixed ideals are generated by monomials of degree $(1,0), (0,1)$, or $(1,1)$.

\begin{definition}\label{DefinitionCSIdeal}
A homogeneous ideal $I \subseteq S $ is called {\bf Cartwright-Sturmfels} 
if there exists a radical Borel-fixed ideal $J\subseteq S$ such that $\HF_I=\HF_J$.
\end{definition}

The following proposition shows that the Cartwright-Sturmfels  property depends only on the Hilbert function of an ideal.
It is a consequence of \protect{\cite[Theorem 3.5]{CDNG15}} and of the well-known fact that every ideal $I \subseteq S$ admits a degeneration to a Borel-fixed ideal.

\begin{prop}\label{PropCSpropertyHilbertFunction}
The following conditions are equivalent for a Hilbert function $\mfh$:
\begin{enumerate}
\item all the ideals parametrized by $\mcH_\kk^\mfh(S)$ are radical;
\item all the ideals parametrized by $\mcH_\kk^\mfh(S)$ are Cartwright-Sturmfels;
\item some ideals parametrized by $\mcH_\kk^\mfh(S)$ are Cartwright-Sturmfels;
\item $\mcH_\kk^\mfh(S)$ contains  a radical Borel-fixed ideal;
\item $\mcH_\kk^\mfh(S)$ contains  a unique Borel-fixed ideal, and this ideal  is radical.
\end{enumerate}
\end{prop}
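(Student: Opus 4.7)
The plan is to establish the cycle of implications $(2)\Rightarrow(1)\Rightarrow(4)\Rightarrow(3)\Rightarrow(2)$, together with the equivalence $(4)\Leftrightarrow(5)$; the residual link $(5)\Rightarrow(4)$ is then immediate. Throughout I assume $\mcH_\kk^\mfh(S)\neq \emptyset$, since otherwise $\mfh$ is not actually the Hilbert function of any ideal in $S$.

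Two of the links in the cycle are essentially tautologies. For $(4)\Rightarrow(3)$, any radical Borel-fixed ideal is itself a witness to its own Cartwright-Sturmfels property in \Cref{DefinitionCSIdeal} (take $J=I$). For $(3)\Rightarrow(2)$, the Cartwright-Sturmfels property depends only on the Hilbert function, so if one ideal in $\mcH_\kk^\mfh(S)$ is Cartwright-Sturmfels then every ideal in $\mcH_\kk^\mfh(S)$ is.

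The key nontrivial step is $(2)\Rightarrow(1)$, which I would obtain by directly invoking \cite[Theorem 3.5]{CDNG15}: every Cartwright-Sturmfels ideal is radical. For $(1)\Rightarrow(4)$ I would use Galligo's theorem combined with Gr\"obner degeneration. Pick any $I \in \mcH_\kk^\mfh(S)$, apply a sufficiently generic $g \in \GL$, and take the initial ideal $J := \mathrm{in}(gI)$ with respect to a bigraded term order. Galligo's theorem ensures that $J$ is Borel-fixed, and the degeneration is flat so $\HF_J=\mfh$, i.e.\ $J \in \mcH_\kk^\mfh(S)$. Hypothesis $(1)$ then forces $J$ to be radical, giving~$(4)$.

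Finally, $(4)\Leftrightarrow(5)$ reduces to the combinatorial claim that a radical Borel-fixed ideal of $S$ is uniquely determined by its Hilbert function. Indeed, by the exchange properties in \eqref{EqExchangeProperty}, such an ideal is generated by an initial segment $x_1,\ldots,x_a$, an initial segment $y_1,\ldots,y_b$, and a Borel-downward-closed family $G$ of squarefree monomials $x_iy_j$ with $i>a$, $j>b$; the integers $a,b$ are read off from $\mfh(1,0),\mfh(0,1)$, and the bigraded structure of $G$ is then recovered from the remaining values of $\mfh$. Given this uniqueness, $(4)\Rightarrow(5)$ follows. The only step that requires substantial external input is $(2)\Rightarrow(1)$, and this is the expected main obstacle; once \cite[Theorem 3.5]{CDNG15} is available, the rest of the proof is a clean combination of Galligo degeneration and the combinatorics of squarefree Borel-fixed ideals.
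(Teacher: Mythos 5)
Your approach is essentially the same as the paper's, which dismisses the proposition in one sentence as a consequence of \cite[Theorem 3.5]{CDNG15} together with Gr\"obner degeneration to a Borel-fixed ideal; your proposal just spells out the cycle of implications using exactly these two ingredients. That said, let me flag two small gaps in the $(4)\Leftrightarrow(5)$ step. First, your uniqueness argument only shows that a \emph{radical} Borel-fixed ideal in $\mcH_\kk^\mfh(S)$ is unique; to deduce $(5)$ you must also rule out a non-radical Borel-fixed ideal with the same Hilbert function. This follows from the chain $(4)\Rightarrow(3)\Rightarrow(2)\Rightarrow(1)$ you already established, which forces every ideal in $\mcH_\kk^\mfh(S)$ to be radical, but it should be said explicitly. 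Second, the sentence ``the bigraded structure of $G$ is then recovered from the remaining values of $\mfh$'' is an assertion, not an argument; the fact is true but not obvious. A clean way to see it is via the multidegree: after reducing to the positive-degree case, $J=J_\mcA$ with $\mcA=\{(a_1,b_1),\ldots,(a_s,b_s)\}$ has minimal primes $P_\ell=(x_1,\ldots,x_{a_\ell},y_1,\ldots,y_{b_{\ell+1}})$ for $\ell=0,\ldots,s$, so the multidegree of $S/J$ is $\sum_{\ell=0}^{s} t_1^{a_\ell}t_2^{b_{\ell+1}}$, whose monomials are pairwise distinct and hence recover $\mcA$; the multidegree is determined by $\mfh$. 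Alternatively, this uniqueness can simply be cited from \cite{CDNG15}, which is closer to what the paper does.
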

\noindent
If $\mcH_\kk^\mfh(S)$ satisfies the  conditions of \Cref{PropCSpropertyHilbertFunction}, we say that it is a {\bf Cartwright-Sturmfels} Hilbert scheme.

An important feature of Cartwright-Sturmfels Hilbert schemes is that every point degenerates to a common one.
\begin{lemma}\label{LemmaBFDegeneration}
Let $I \subseteq S$ be a Cartwright-Sturmfels ideal, and let $J\subseteq S$ be the unique Borel-fixed ideal such that $\HF_I = \HF_J$.
Then,
$I$ admits a one-parameter flat degeneration to $J$.
\end{lemma}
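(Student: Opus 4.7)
The plan is to realize $J$ as a bigraded generic initial ideal of $I$ and then use the standard Gröbner degeneration, together with the connectedness of $\GL$, to construct the flat family. First, fix a $\Z^2$-graded monomial order $<$ on $S$, for instance the one refining the bidegree by reverse-lexicographic order on each block of variables. A bigraded version of Galligo's theorem, obtained by a direct extension of the classical argument (see \cite{CDNG22}), provides a Zariski open dense subset $U \subseteq \GL$ and a Borel-fixed ideal $\mathrm{gin}_<(I) \subseteq S$ such that $\mathrm{in}_<(g \cdot I) = \mathrm{gin}_<(I)$ for every $g \in U$. Since both $\GL$-translation and passage to an initial ideal preserve the Hilbert function, $\HF_{\mathrm{gin}_<(I)} = \HF_I = \HF_J$; by \Cref{PropCSpropertyHilbertFunction}(5), the unique Borel-fixed ideal of $S$ with this Hilbert function is $J$, and hence $\mathrm{gin}_<(I) = J$.

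Next, I would promote this equality of ideals to a flat family in two separate pieces. Fix $g \in U$ and a strictly positive weight $w \in \Z^{m+n}_{>0}$ representing $<$ on the finitely many bidegrees that determine $J$. The one-parameter subgroup $\lambda_w(t) \cdot x_i = t^{w_i} x_i$ yields, via the standard Rees-algebra construction, a flat $\A^1$-family whose fiber at $t \neq 0$ is $\lambda_w(t) \cdot (g \cdot I)$ and whose fiber at $t = 0$ is $\mathrm{in}_w(g \cdot I) = J$. In parallel, the linear path $g(s) = (1-s)\,\mathrm{id} + s g$ lies in $\GL$ for all but finitely many $s \in \A^1$, producing a flat $\A^1$-family that connects $I$ (at $s = 0$) to $g \cdot I$ (at $s = 1$).

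To combine these into a single one-parameter degeneration, I would use the properness of $\mcH_\kk^\mfh(S)$ from \Cref{ThmMultigradedHilbertSchemeOverZ}, which applies because the $\Z^2$-grading is positive. The two $\A^1$-families determine morphisms $\A^1 \to \mcH_\kk^\mfh(S)$ whose combined image lies in a closed connected subvariety $V \subseteq \mcH_\kk^\mfh(S)$ containing both $[I]$ and $[J]$. Taking an irreducible curve $C \subseteq V$ through $[I]$ and $[J]$, which exists in any projective variety by repeatedly intersecting with general hyperplanes through both points in a projective embedding, and then normalizing and restricting to an affine open, yields the desired one-parameter flat degeneration of $I$ to $J$. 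The main technical ingredient here is the bigraded Galligo theorem; the remaining arguments are standard manipulations with group actions and Gröbner bases, and the only delicate point is promoting the two-step path from $I$ to $g \cdot I$ to $J$ into a genuine $\A^1$-family, which properness of the Hilbert scheme makes routine.
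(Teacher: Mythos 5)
Your identification of the key mechanism (bigraded generic initial ideals together with uniqueness of the Borel-fixed point with a given Hilbert function) matches what the paper is invoking; the paper simply cites \cite[Corollary 3.7]{CDNG15}, which carries out exactly the gin construction you sketch. However, the final combination step in your argument contains a genuine gap.

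You write that the combined image of your two families lies in a closed connected $V \subseteq \mcH_\kk^\mfh(S)$, and that "an irreducible curve through $[I]$ and $[J]$ exists in any projective variety by repeatedly intersecting with general hyperplanes through both points." This last claim is false: the union of two irreducible curves meeting at a single point is connected but reducible, and in that case no irreducible curve can pass through a point on one component and a point on the other, since it would have to be contained in a single component. Your $V$ — the union of the Gröbner degeneration curve through $[g \cdot I]$ and $[J]$ and the image of the linear path from $[I]$ to $[g \cdot I]$ — is in general precisely such a reducible variety, so the hyperplane-slicing argument does not get off the ground (nor does it make sense dimensionally: $V$ is already a curve).

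The repair is to replace the ad-hoc $V$ by the closure of the $\GL$-orbit of $[I]$ in $\mcH_\kk^\mfh(S)$. Since $\GL = \GL(\sfX) \times \GL(\sfY)$ is irreducible and the orbit map $\GL \to \mcH_\kk^\mfh(S)$, $g \mapsto [g \cdot I]$, is a morphism, the orbit closure is irreducible and closed. It contains $[I]$, and it contains $[J]$ because each fiber $\lambda_w(t)(g \cdot I)$ of the Rees family at $t \neq 0$ lies in the orbit while $J$ is their limit. The hyperplane-section argument is now valid on this irreducible variety, and yields the desired one-parameter family. Alternatively, and more in the spirit of how "degeneration" is standardly used, the Rees family of $g \cdot I$ alone already suffices: it is a flat family over $\A^1$ with fiber $J$ at $t = 0$ and fiber $\lambda_w(t)(g \cdot I)$, isomorphic to $I$ as a $\Z^2$-graded ideal, at $t \neq 0$. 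With this reading your linear path and the final gluing are unnecessary.
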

\begin{proof}
This follows from the generic initial ideal construction
\cite[Corollary 3.7]{CDNG15}.
\end{proof}

In our setup,  Cartwright-Sturmfels ideals
enjoy the following properties.

\begin{prop}\label{PropBasicPropertiesCS}
Let $I\subseteq S$ be a  Cartwright-Sturmfels ideal.
\begin{enumerate}
\item The minimal generators of $I$ have degree $(1,1), (1,0),$ or $(0,1)$.
\item The Castelnuovo-Mumford regularity of $I$ is at most 2.
\item 
If $L\subseteq S$ is \begin{center}
•
\end{center} generated by forms of degree $(1,0)$ or $(0,1)$, 
then the ideal 
$(I+L)/L\subseteq {S}/{L}$ is  Cartwright-Sturmfels
{\cite[Theorem  2.16]{CDNG18}}.
\end{enumerate}
\end{prop}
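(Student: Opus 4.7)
The plan is to derive (1) and (2) from the Borel-fixed degeneration of \Cref{LemmaBFDegeneration}; part (3) is simply the cited theorem. Let $J \subseteq S$ denote the unique radical Borel-fixed ideal with $\HF_I = \HF_J$, whose existence is guaranteed by \Cref{PropCSpropertyHilbertFunction}. Because $I$ admits a one-parameter flat degeneration to $J$, standard upper semicontinuity applied to the resulting $\kk[t]$-flat family yields
\[
\beta_{0,\delta}^S(I)\le \beta_{0,\delta}^S(J) \text{ for every } \delta\in\Z^2,
\qquad
\operatorname{reg}(I) \le \operatorname{reg}(J).
\]
Thus it suffices to prove (1) and (2) with $J$ in place of $I$.

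For the degrees of the minimal generators of $J$, I would make explicit the argument sketched before \Cref{DefinitionCSIdeal}. Let $g$ be a minimal generator of $J$; it is square-free since $J$ is radical, so $g = x_{i_1}\cdots x_{i_a}\, v$ with $i_1<\cdots<i_a$ and $v$ a square-free monomial in the $y_j$'s of degree $b$. If $a\ge 2$, set $w = x_{i_1}\cdots x_{i_{a-1}}\, v$, so that $g = x_{i_a} w \in J$; the first exchange in \eqref{EqExchangeProperty} yields $x_{i_{a-1}} w = x_{i_1}\cdots x_{i_{a-2}}\, x_{i_{a-1}}^2\, v \in J$. Since $J$ is a radical monomial ideal, every minimal prime of $J$ is generated by a subset of the variables, and so the presence of the square $x_{i_{a-1}}^2$ forces $g/x_{i_a} \in J$, contradicting minimality. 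A symmetric argument using the second exchange in \eqref{EqExchangeProperty} handles the case $b\ge 2$, proving that $J$ is minimally generated in degrees $(1,0), (0,1), (1,1)$, and hence so is $I$.

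For (2), I would invoke the well-known fact — proved, for example, via the Eliahou-Kervaire resolution of stable ideals — that the Castelnuovo-Mumford regularity of a Borel-fixed monomial ideal equals the maximum total degree of a minimal generator. From part (1) applied to $J$, this maximum equals $2$, so $\operatorname{reg}(J) = 2$, and the semicontinuity inequality above gives $\operatorname{reg}(I)\le 2$.

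The only mildly delicate ingredients are the upper semicontinuity of multigraded Betti numbers and of Castelnuovo-Mumford regularity in the flat family furnished by \Cref{LemmaBFDegeneration}, and the Eliahou-Kervaire computation of the regularity of a Borel-fixed ideal; both are entirely standard, so the substance of the proof lies in the radical exchange manipulation above, which is intrinsic to the Cartwright-Sturmfels setting.
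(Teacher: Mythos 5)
The paper does not actually give a proof of this proposition; it records parts (1)--(3) as known facts from the Cartwright--Sturmfels literature, with an explicit citation only for part (3). So your attempt, rather than reconstructing a paper proof, is supplying a self-contained argument. Your reduction to the radical Borel-fixed degeneration via \Cref{LemmaBFDegeneration}, combined with semicontinuity of graded Betti numbers and regularity, is the right strategy, and your exchange-property argument for part (1) is correct: if $a\ge 2$ the exchange moves $g$ to a monomial with a repeated $x$-factor, whose square-free part $g/x_{i_a}$ lies in the radical ideal $J$, contradicting minimality of $g$.

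However, your proof of part (2) has a genuine gap. You invoke the Eliahou--Kervaire computation that regularity of a Borel-fixed ideal equals the top degree of its minimal generators, but that theorem is for ideals that are stable (or strongly stable) with respect to the Borel subgroup of the full $\GL_{m+n}$ acting on all $m+n$ variables. The ideal $J$ here is Borel-fixed only for the \emph{product} $\mathrm{B}(\sfX)\times\mathrm{B}(\sfY)\subseteq\GL(\sfX)\times\GL(\sfY)$: the exchange property in \eqref{EqExchangeProperty} lets you swap $x$'s among themselves and $y$'s among themselves, but never exchanges an $x$ for a $y$. As a result $J$ is \emph{not} a stable monomial ideal in any total ordering of the variables. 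Concretely, take $J=(x_1y_1)$: for $x_1y_1$ to satisfy stability (with $y_1$ as the last variable), $x_1x_j$ would have to belong to $J$ for all $j$, which fails. Thus the Eliahou--Kervaire resolution simply does not resolve $J$, and the regularity identity you quote has no basis here. A correct route is to observe that, after modding out the linear part $L$ generated by $[J]_{(1,0)}\oplus[J]_{(0,1)}$, the remaining ideal $J_\mcA$ is the edge ideal of a Ferrers bipartite graph (the staircase $O(\mcA)$), and Ferrers ideals are known (Corso--Nagel; or via Fr\"oberg's theorem, since the complement graph is chordal) to have a $2$-linear free resolution, hence regularity exactly $2$ when nonzero. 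Combined with your semicontinuity step this gives $\operatorname{reg}(I)\le 2$.
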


By  \Cref{PropCSpropertyHilbertFunction}, 
there is a one-to-one correspondence between Cartwright-Sturmfels Hilbert schemes
 of $S$ parametrizing ideals that are concentrated in positive degrees 
and radical Borel-fixed ideals of $S$ generated in degree $(1,1)$.
In the remainder of the section, we discuss the combinatorics of these ideals.

\begin{notation}[Borel partial order]\label{NotationAntichain}
Given  $p \in \N$, let $[p]=\{1, 2, \ldots, p \}$.
Consider the poset  $\mcP=[m]\times [n]$,  with partial  order  given by componentwise inequality.
Fix an antichain $\mcA \subseteq \mcP$,
$$
\mcA = \big\{(a_1, b_1), \ldots, (a_s, b_s) \big\},
$$
where the elements are ordered so that 
$$
1 \leq a_1 < a_2 < \cdots < a_{s-1} < a_s \leq m
\qquad
\text{and}
\qquad
n \geq b_1 > b_2 > \cdots > b_{rs1} >b_s \geq 1.
$$
We additionally  define 
$
a_0 := 0,\, a_{s+1} := m,\,   b_{0}:=n,\, b_{s+1} := 0.
$

The antichain $\mcA$ determines the following order ideal\footnote{An order ideal $O \subseteq \mcP$ is a  subset such that
$\bfv \in {O}$ whenever  $\bfw \in O$ and $\bfv \leq \bfw$.}
 of $\mcP$
$$
O(\mcA) := \big\{ \bfv \in \mcP \, \mid \, \bfv \leq \bfw \text{ for some } \bfw \in \mcA\big\}.
$$
\end{notation}

\begin{definition}\label{DefIdealAntichain}
Let $\mcA$ be an antichain in $\mcP$, and $O(\mcA)$ its order ideal.
Define the  ideal
\begin{equation}\label{EqIdealFromAntichain}
J_\mcA := \big( x_a y_b \, \mid \, (a,b) \in O(\mcA)\big) \subseteq S.
\end{equation}
\end{definition}

\begin{prop}\label{PropositionBFJiffAntichain}
Let $J\subseteq S$ be an ideal. 
The following conditions are equivalent:
\begin{enumerate}
\item $J$ is  radical,  Borel-fixed, and concentrated in positive degrees;
\item $J = J_\mcA$ for some antichain $\mcA\subseteq \mcP$.
\end{enumerate}
\end{prop}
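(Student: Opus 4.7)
The plan is to establish the two directions via a dictionary between squarefree monomials $x_iy_j$ in $J$ and elements of the poset $\mcP$, invoking the exchange characterization \eqref{EqExchangeProperty} of radical Borel-fixed ideals together with the observation (immediately preceding \Cref{DefinitionCSIdeal}) that radical Borel-fixed ideals of $S$ are generated in degrees $(1,0)$, $(0,1)$, or $(1,1)$.

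For the direction $(2) \Rightarrow (1)$: The ideal $J_\mcA$ is generated by squarefree monomials, hence radical, and its generators all have degree $(1,1)$, so it is concentrated in positive degrees. To verify that $J_\mcA$ is Borel-fixed, I would check the exchange properties on an arbitrary monomial $m = x_iu \in J_\mcA$ with $h < i$. Some generator $x_ay_b$ with $(a,b)\in O(\mcA)$ divides $m$. If $a \neq i$, then $x_ay_b$ already divides $x_hu$, so $x_hu \in J_\mcA$. If $a = i$, then $(h,b) \leq (i,b) \in O(\mcA)$, and since $O(\mcA)$ is a lower set of $\mcP$, $(h,b) \in O(\mcA)$, so $x_hy_b$ is a generator of $J_\mcA$ dividing $x_hu$. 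The symmetric argument handles the $y$-exchange.

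For the direction $(1) \Rightarrow (2)$: Since $J$ is radical and Borel-fixed, its minimal generators lie in degrees $(1,0), (0,1), (1,1)$. The hypothesis that $J$ is concentrated in positive degrees forces $[J]_{(1,0)} = [J]_{(0,1)} = 0$, so $J$ has no generators in those degrees and is thus generated by squarefree monomials of the form $x_iy_j$. Define
$$
M := \big\{(i,j) \in \mcP \,:\, x_iy_j \in J\big\}.
$$
Applying the $x$-exchange property to $x_iy_j \in J$ with $u = y_j$, and the $y$-exchange to it with $u = x_i$, shows that $M$ is downward-closed in $\mcP$. Taking $\mcA$ to be the set of maximal elements of $M$ yields an antichain with $O(\mcA) = M$, and hence $J = J_\mcA$.

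There is no substantial obstacle here: the argument is a purely combinatorial translation, and the only point requiring care is correctly using the characterization of radical Borel-fixed ideals to restrict generators to degree $(1,1)$ under the positive-degree hypothesis, after which the bijection between such ideals and antichains in $\mcP$ is immediate.
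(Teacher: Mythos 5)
Your proposal is correct and takes essentially the same approach as the paper: both directions reduce to the observation that radical Borel-fixed ideals concentrated in positive degrees are generated in degree $(1,1)$, and to identifying the corresponding set of $(i,j)$ with an order ideal via the exchange property \eqref{EqExchangeProperty}. The only cosmetic difference is that you cite the generator-degree statement following \eqref{EqExchangeProperty} directly rather than through \Cref{PropBasicPropertiesCS}, and you spell out the exchange check in $(2)\Rightarrow(1)$ that the paper leaves implicit.
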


\begin{proof}
Suppose $J$ is  radical,  Borel-fixed, and concentrated in positive degrees.
By  \Cref{PropBasicPropertiesCS}, $J$ is generated in degree $(1,1)$.
By \eqref{EqExchangeProperty}, the set $O =\{(a,b) \in \mcP \mid x_ay_b \in J\}$ is an order ideal
of $\mcP$. 
Letting $\mcA$ be the set of maximal elements in $O$, we have $O(\mcA) = O$ and $J = J_\mcA$.
Conversely, if $J = J_\mcA$ for some antichain $\mcA\subseteq \mcP$,
then $J$ is radical since it is generated by square-free monomials, 
it is Borel-fixed by  \eqref{EqExchangeProperty}, and it is obviously concentrated in positive degrees.
\end{proof}

\begin{remark}
The monomials indexed by $\mcA$ are sometimes called the {Borel generators} of $J_\mcA$,
see for instance
\cite{BorelGenerators}.
\end{remark}

\begin{example}\label{ExampleBipartiteGraph} 
It is helpful to visualize the  data of the ideal $J_\mcA$ through a bipartite graph,
where 
vertices corresponds to the sets $[m]$ and $[n]$, and edges to the set  $O(\mcA)$.

Let $m = 8, n = 7$, $\mcA = \big\{ (1,6), (2,4), (5,1)\big\}$. The corresponding graph is shown below:
\begin{center}
\begin{tikzpicture}
\draw [   thick] (6,0)--(1,2);
\node at (1,-1) {$b_3$};
\draw [  thick] (4,0)--(2,2);
\draw [  thick] (1,0)--(5,2);
\node at (4,-1) {$b_2$};
\node at (6,-1) {$b_1$};
\node at (5,3) {$a_3$};
\node at (2,3) {$a_2$};
\node at (1,3) {$a_1$};
\foreach \i in {1,...,5}
{
\draw [ thin] (\i,0)--(1,2);
}
\foreach \i in {1,...,3}
{
\draw [ thin] (\i,0)--(2,2);
}
\foreach \i in {3,...,4}
{
\draw [ thin] (1,0)--(\i,2);
}
\foreach \i in {1,...,8}
{
\draw [fill] (\i,2) circle [radius=0.1];
\node at (\i,2.5) {$x_{\i}$};
}
\foreach \i in {1,...,7}
{
\draw [fill] (\i,0) circle [radius=0.1] ;
\node at (\i,-0.5) {$y_{\i}$};
}
\end{tikzpicture}
\end{center}
The edges highlighted with a thicker line correspond to elements of the antichain $\mcA$.
In larger examples, such as 
\Cref{ExampleCutting}, it is convenient to only draw the elements of the antichain. 
For example, in the current example, we obtain the following graph
\begin{center}
\begin{tikzpicture}
\draw [  thick] (6,0)--(1,2);
\node at (1,-1) {$b_3$};
\draw [  thick] (4,0)--(2,2);
\draw [  thick] (1,0)--(5,2);
\node at (4,-1) {$b_2$};
\node at (6,-1) {$b_1$};
\node at (5,3) {$a_3$};
\node at (2,3) {$a_2$};
\node at (1,3) {$a_1$};
\foreach \i in {1,...,8}
{
\draw [fill] (\i,2) circle [radius=0.1];
\node at (\i,2.5) {$x_{\i}$};
}
\foreach \i in {1,...,7}
{
\draw [fill] (\i,0) circle [radius=0.1] ;
\node at (\i,-0.5) {$y_{\i}$};
}
\end{tikzpicture}
\end{center}

\end{example}

\section{Tangent space}\label{SectionTangentSpace}

A central goal of this paper is to study the bigraded deformations of Cartwright-Sturmfels ideals. In this section, we determine the weight space decomposition of the tangent space at the unique Borel-fixed ideal, that is, we classify its first-order infinitesimal deformations.

\begin{notation}[Tangent space and fine grading]\label{NotationFineGrading}
Let $\mcA \subseteq \mcP$ be an antichain as in  \Cref{NotationAntichain}, and consider the ideal $J_\mcA \subseteq S$ of  \Cref{DefIdealAntichain}.
Let $\mfh = \HF_{J_\mcA}$.
We denote by $T_\mcA$ the tangent space to the multigraded Hilbert scheme $\mcH^\mfh_\kk(S)$ at the point parametrizing the ideal $J_\mcA$.
By \cite[Proposition  1.6]{HaimanSturmfels},
we have the identification 
\begin{equation}\label{EqTangentSpaceHaimanSturmfels}
T_\mcA = \big[\Hom_S(J_\mcA, S/J_\mcA)\big]_{(0,0)}.
\end{equation}
The ideal  $J_\mcA$ is fixed by the torus of $\GL(\sfX)\times\GL(\sfY)$ consisting of  automorphisms
represented by diagonal matrices in the fixed bases, 
thus, the tangent space $T_\mcA$ is also torus-fixed.
It follows that the vector space $T_\mcA$ is graded with respect to the $\Z^m\oplus \Z^n$-grading on $S$, denoted $\mdeg(-)$, 
defined by
\begin{equation}\label{EqFineGrading}
\Z^m \oplus \Z^n = \langle \bfe_1, \ldots, \bfe_m, \bff_1, \ldots, \bff_n\rangle, \quad
\mdeg(x_i) = \bfe_i,
\quad
 \mdeg(y_j) = \bff_j.
\end{equation}
In this section,
we consider both the $\Z^2$-grading $\deg(-)$ of 	\Cref{NotationBigraded}
 and the $\Z^m\oplus\Z^n$-grading $\deg_T(-)$ of \eqref{EqFineGrading}.
The latter is a refinement of the former, by the  homomorphism $\Z^m \oplus \Z^n \to \Z^2$ that maps
$\bfe_i \mapsto (1,0), \bff_j \mapsto (0,1)$.
To avoid ambiguity, 
in this section alone,
we use the words ``multigraded'' and ``multidegree'' when referring to $\deg_T(-)$, and ``bigraded'' and ``bidedree'' when referring to $\deg(-)$.
(In the other sections, we only consider the $\Z^2$-grading, thus, we simply write ``graded'' and ``degree'').
 \end{notation}

Our goal is to determine the multigraded decomposition of $T_\mcA$.
We show, in particular,  that this decomposition is multiplicity-free, 
that is, every non-zero multigraded component is one-dimensional.

We begin by introducing a classification of tangent vectors  by their multidegree.

\begin{definition}
Let $\varphi\in T_\mcA$ be a multigraded tangent vector.
 We say that $\varphi$ is
\begin{enumerate}
\item a {\bf linear tangent} if  $\mdeg(\varphi) = \bfe_j - \bfe_i$ 
or $\mdeg(\varphi) = \bff_h - \bff_k$
for some $i \ne j$ or $h \ne k$;

\item  a {\bf quadratic tangent} if
 $\mdeg(\varphi) = \bfe_j - \bfe_i + \bff_h - \bff_k$ for some $i \ne j, h \ne k$.
\end{enumerate}
\end{definition}

\begin{remark}\label{RemarkTangentVectors}
Let $\varphi\in T_\mcA$ be a non-zero multigraded tangent vector.
\begin{enumerate}
\item Since $\deg(\varphi) = (0,0)$, 
$\varphi$ maps a monomial minimal generator of $J_\mcA$ to a monomial of $[S/J_\mcA]_{(1,1)}$.
Since 
$J_\mcA$ is generated 
by  monomials $x_iy_k$,
it follows that $\varphi$ is either  linear or  quadratic.
\item Assume $\varphi$ is linear, with $\mdeg(\varphi) = \bfe_j - \bfe_i$.
Then, up to scaling coefficients, $\varphi(x_iy_k) = x_j y_k$ for some $k$ such that 
$(i,k) \in O(\mcA)$,
$(j,k) \notin O(\mcA)$.
By \eqref{EqExchangeProperty}, this implies that $i < j$.
Moreover,  $\varphi(x_{i'}y_h) = 0$ whenever $i' \ne i$.
An analogous statement holds if $\mdeg(\varphi) = \bff_h - \bff_k$.
\item Assume $\varphi$ is quadratic, 
with  $\mdeg(\varphi) = \bfe_j - \bfe_i + \bff_h - \bff_k$.
Then, up to scaling coefficients, $\varphi(x_iy_k) = x_jy_h$, 
 $(i,k) \in O(\mcA), 
(j,h) \notin O(\mcA)$,
and $\varphi(x_{i'}y_{k'}) =0$ for all $(i',k')\ne (i,k)$.
\end{enumerate}
\end{remark}

The following  identification will be useful for classifying  tangent vectors.

\begin{lemma}\label{LemmaTangentKillSyzygy}
There is an isomorphism of multigraded vector spaces between $T_\mcA$ 
and the subspace of $\Hom_\kk\big([J_\mcA]_{(1,1)},[S/J_\mcA]_{(1,1)}\big)$
spanned by  
the  maps
$\varphi : [J_\mcA]_{(1,1)} \to [S/J_\mcA]_{(1,1)}$ such that 
$$
x_j\varphi(x_i y_k) = x_i\varphi(x_j y_k) 
\quad
\text{whenever}
\quad
(i,k), (j,k)\in O(\mcA)
$$
and
$$
y_h\varphi(x_i y_k) = y_k\varphi(x_i y_h) 
\quad
\text{whenever}
\quad
(i,h), (i,k)\in O(\mcA).
$$
\end{lemma}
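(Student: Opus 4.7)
The plan is to deduce the statement from the Haiman--Sturmfels identification \eqref{EqTangentSpaceHaimanSturmfels} by passing to restrictions in the generating degree $(1,1)$. By \Cref{PropBasicPropertiesCS}(1), $J_\mcA$ is generated in bidegree $(1,1)$, so any $S$-module homomorphism $\varphi\colon J_\mcA \to S/J_\mcA$ of bidegree $(0,0)$ is uniquely determined by its restriction to $[J_\mcA]_{(1,1)}$, which lands in $[S/J_\mcA]_{(1,1)}$. This yields an injective $\Z^m\oplus\Z^n$-graded map
$$
T_\mcA \;=\; \big[\Hom_S(J_\mcA, S/J_\mcA)\big]_{(0,0)} \;\hookrightarrow\; \Hom_\kk\big([J_\mcA]_{(1,1)},\, [S/J_\mcA]_{(1,1)}\big),
$$
and it remains to identify its image.

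A $\kk$-linear map $\varphi$ on the monomial generators $\{x_iy_k : (i,k)\in O(\mcA)\}$ extends to an $S$-module map into $S/J_\mcA$ if and only if it respects every $S$-linear syzygy among these generators, modulo $J_\mcA$. For a squarefree monomial ideal, the Taylor resolution shows that such syzygies are generated by the pairwise relations coming from pairwise $\mathrm{lcm}$'s, which in the present situation fall into three families: bidegree $(2,1)$ relations $x_j(x_iy_k)-x_i(x_jy_k)$ when $(i,k),(j,k)\in O(\mcA)$; bidegree $(1,2)$ relations $y_h(x_iy_k)-y_k(x_iy_h)$ when $(i,k),(i,h)\in O(\mcA)$; and bidegree $(2,2)$ relations $x_jy_l(x_iy_k)-x_iy_k(x_jy_l)$ when $(i,k),(j,l)\in O(\mcA)$ with $i\ne j$ and $k\ne l$. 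The first two families give precisely the two displayed conditions in the statement.

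The main step, and essentially the only place requiring a genuine argument, is showing that the $(2,2)$-relations are automatically killed once the two displayed conditions are imposed, so that they contribute no further constraints. This is where the order-ideal structure of $O(\mcA)$ enters decisively: if $(i,k),(j,l)\in O(\mcA)$ with $i<j$ and $k\ne l$, then either $l<k$, in which case $(i,l)\leq (i,k)$ forces $(i,l)\in O(\mcA)$ by order-ideal closure, or $l>k$, in which case $(j,k)\leq (j,l)$ forces $(j,k)\in O(\mcA)$. In the first case I would chain the two linear identities
$$
y_l\,\varphi(x_iy_k) \;=\; y_k\,\varphi(x_iy_l)
\qquad\text{and}\qquad
x_j\,\varphi(x_iy_l) \;=\; x_i\,\varphi(x_jy_l)
$$
to conclude $x_jy_l\,\varphi(x_iy_k)=x_iy_k\,\varphi(x_jy_l)$; the second case is symmetric. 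Together with the obvious fact that the restriction map preserves the $\Z^m\oplus\Z^n$-decomposition, this identifies the image of the above injection with exactly the subspace described in the statement, completing the proof.
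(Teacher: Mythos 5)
Your proof is correct, and it takes a slightly different route than the paper's. Both proofs start from the Haiman--Sturmfels identification $T_\mcA = [\Hom_S(J_\mcA, S/J_\mcA)]_{(0,0)}$ and reduce to the question of which $\kk$-linear maps on generators extend compatibly with first syzygies. The divergence is in how the syzygies are controlled: the paper invokes \Cref{PropBasicPropertiesCS} to get $\mathrm{reg}(J_\mcA) \le 2$, which immediately forces all minimal first syzygies into bidegrees $(2,1)$ and $(1,2)$, so there is nothing further to check. You instead appeal to the Taylor complex, which produces a (generally non-minimal) generating set of syzygies indexed by all pairs of generators, and then do the honest verification that the $(2,2)$ pairwise syzygies are consequences of the linear ones, using the order-ideal property of $O(\mcA)$ in a short chaining argument. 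Both are valid; your argument is more self-contained (no regularity input needed) at the cost of an extra combinatorial step, while the paper's is shorter because it outsources the work to the linear-resolution property of Cartwright--Sturmfels ideals. The chaining computation you sketch (splitting on $l<k$ versus $l>k$ and passing through the common generator $(i,l)$ or $(j,k)$) is correct and complete.
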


\begin{proof}
Recall that $J_\mcA$ is generated 
by the monomials $x_iy_h$ with $(i,h) \in O(\mcA)$.
By \Cref{PropBasicPropertiesCS}, the Castelnuovo-Mumford regularity of $J_\mcA$ is equal to 2, 
hence, the first syzygies of $J_\mcA$ are generated by the relations
$x_i(x_j y_k) = x_j(x_i y_k)$ with $(i,k), (j,k)\in O(\mcA),$
and
$y_h(x_i y_k) = y_k(x_i y_h) $ with $ (i,h), (i,k)\in O(\mcA).$
Since 
a $\kk$-linear map $\varphi : [J_\mcA]_{(1,1)} \to [S/J_\mcA]_{(1,1)}$ 
extends to an $S$-linear map $\varphi: J_\mcA \to S/J_\mcA$ if and only if 
it is compatible with the first syzygies of $J_\mcA$, we obtain the desired conclusion.
\end{proof}

By \Cref{LemmaTangentKillSyzygy},
a tangent vector can be interpreted pictorially as a map from the set of edges 
to the set of non-edges in the graph determined by $O(\mcA)$, cf. \Cref{ExampleBipartiteGraph}.

The following  are the main results of this section.

\begin{prop}[Description of linear tangents]\label{PropLinearTangents}
Let $\bfd = \bfe_j - \bfe_i$ with $i \ne j$.
Then,   $\dim_\kk [T_\mcA]_\bfd \leq 1$.
We have $\dim_\kk [T_\mcA]_\bfd = 1$ if and only if
$i < j$ and there exists $k\in [n]$ such that $(i,k)\in O(\mcA)$ and $(j, k)\notin O(\mcA)$,
equivalently,
if and only if there exists $\ell$ such that $i \leq a_\ell < j$.
The analogous results hold for $\bfd = \bff_h - \bff_k$.
\end{prop}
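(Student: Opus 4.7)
The strategy is to exploit the identification from \Cref{LemmaTangentKillSyzygy} to reduce the problem to a linear-algebraic question about scalars $c_k$, and then to extract the combinatorial criterion describing when a nonzero tangent of multidegree $\bfe_j - \bfe_i$ can exist.

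First, I would pin down the shape of an arbitrary multigraded $\varphi \in [T_\mcA]_{\bfe_j-\bfe_i}$. Using the multidegree calculation together with \Cref{RemarkTangentVectors}(2), one sees that $\varphi(x_{i'}y_k)$ must vanish whenever $i' \ne i$ (otherwise the image would have a negative exponent at $\bfe_i$), while for every $k$ with $(i,k) \in O(\mcA)$ one may write $\varphi(x_i y_k) = c_k\, x_j y_k$ for some scalar $c_k \in \kk$, read modulo $J_\mcA$. Note that if $(j,k) \in O(\mcA)$ the monomial $x_j y_k$ already lies in $J_\mcA$, so the corresponding $c_k$ contributes nothing; the content of $\varphi$ is therefore carried by the set
\[
K := \{k \in [n] \, : \, (i,k) \in O(\mcA), \ (j,k) \notin O(\mcA)\}.
\]

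Second, I would impose the two syzygy conditions from \Cref{LemmaTangentKillSyzygy}. The first (Koszul) condition is automatic: in every instance either both sides vanish, or the one surviving summand has a factor $x_q y_k$ with $(q,k) \in O(\mcA)$ and thus lies in $J_\mcA$. The substantive condition is the second one: for each pair $h,k$ with $(i,h), (i,k) \in O(\mcA)$ it becomes
\[
(c_k - c_h)\, x_j y_k y_h \equiv 0 \pmod{J_\mcA}.
\]
Since $x_j y_k y_h \in J_\mcA$ exactly when $(j,k) \in O(\mcA)$ or $(j,h) \in O(\mcA)$, this forces $c_k = c_h$ for all $h, k \in K$, and places no meaningful constraint outside of $K$. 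Hence the space of such $\varphi$ is at most one-dimensional, with equality if and only if $K$ is nonempty.

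Third, I would translate nonemptiness of $K$ into the stated combinatorial conditions. If $j \le i$, then $(j,k) \le (i,k)$ componentwise in $\mcP$, so $(i,k) \in O(\mcA)$ forces $(j,k) \in O(\mcA)$ by the order-ideal property, giving $K = \emptyset$; thus $K \ne \emptyset$ implies $i < j$. Assuming $i < j$, I would use the explicit description $O(\mcA) = \bigcup_\ell \{(v_1, v_2) : v_1 \leq a_\ell,\, v_2 \leq b_\ell\}$ to verify that $K \ne \emptyset$ is equivalent to the existence of an index $\ell$ with $i \leq a_\ell < j$: one direction is witnessed by $k = b_\ell$ (the chain structure $b_1 > \cdots > b_s$ ensures $(j, b_\ell) \notin O(\mcA)$), and the other direction extracts such an $\ell$ from any witness $k$ by choosing $\ell$ with $(i,k) \le (a_\ell,b_\ell)$ and observing that then $j > a_\ell$. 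The analogous statement for $\bfd = \bff_h - \bff_k$ follows by swapping the roles of the $x$ and $y$ variables throughout.

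The main obstacle will be the second syzygy step, where one must carefully track when $x_j y_k y_h$ belongs to $J_\mcA$ in order to propagate the equality $c_k = c_h$ across all of $K$, rather than only between certain pairs; once this is handled cleanly, the combinatorial translation is routine bookkeeping with the chain structure of $\mcA$.
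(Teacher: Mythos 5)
Your proposal is correct and follows essentially the same strategy as the paper's proof: both use \Cref{LemmaTangentKillSyzygy} to reduce to a linear-algebraic analysis on $[J_\mcA]_{(1,1)}$, deduce from the second (relational) syzygy condition that a single scalar determines $\varphi$ on the support set $K$, and then translate nonemptiness of $K$ into the condition $i \leq a_\ell < j$ via the chain structure of $\mcA$. The only cosmetic difference is that you parametrize the candidate tangent by scalars $c_k$ and argue they must agree, whereas the paper fixes one nonzero value and propagates equality to the others; both are the same computation organized slightly differently.
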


\begin{proof}
Assume $0 \ne \varphi\in T_\mcA$ is a multigraded tangent vector with $\mdeg(\varphi) = \bfe_j - \bfe_i$.
By  \Cref{RemarkTangentVectors}, we have $i<j$,
 there exists a $k$ such that $\varphi(x_i y_{k}) = x_j y_{k} \ne 0$,
 with $(i,k)\in O(\mcA)$ and $(j,k)\notin O(\mcA)$,
 and $\varphi(x_{i'}y_h) = 0$ whenever $i' \ne i$.
We also have  $\varphi(x_iy_h) = x_j y_h = 0$ whenever $(j,h)\in O(\mcA)$.
It remains to determine $\varphi(x_iy_h)$ when $(i,h)\in O(\mcA)$  and $(j,h)\notin O(\mcA)$.
In this case, we have 
$$ 
y_{k} \varphi(x_iy_{h} ) =
y_{h}\varphi(x_i y_{k}) =
x_j y_{h}  y_{k} \ne 0
$$
since $(j,h),(j,k) \notin O(\mcA)$ and $J_\mcA$ is generated in bidegree (1,1).
It follows that
$
 \varphi(x_iy_{h} ) =
 x_j y_{h}.
$
This concludes the description of  $\varphi$,
and shows that $\dim_\kk [T_\mcA]_\bfd \leq 1$.

Now,  assume that  $i < j$ and that  $(i,k)\in O(\mcA),(j, k)\notin O(\mcA)$ for some $k \in [n]$.
Define a $\kk$-linear map $\varphi : [J_\mcA]_{(1,1)} \to [S/J_\mcA]_{(1,1)}$  as in the previous paragraph.
We claim that $\varphi$ satisfies the conditions of \Cref{LemmaTangentKillSyzygy}.
We must check the compatibility conditions for $\varphi$ on pairs of monomials corresponding to adjacent edges in $O(\mcA)$, and, of course, we may assume that $\varphi$ is non-zero on at least one of them. 
Thus, let $h \in [n]$ be such that $(i,h)\in O(\mcA), (j,h)\notin O(\mcA)$, so $\varphi(x_iy_h) = x_j y_h \ne 0$.
If $(i',h)\in O(\mcA)$ with $i' \ne i$, then $x_{i'}y_h =0 $ in $S/J_\mcA$, and $\varphi(x_{i'}y_h) =0$ by definition of $\varphi$,
thus,
$$
x_{i'}\varphi(x_iy_h) = x_{i'}  x_j y_h = x_j \cdot 0  = x_i \cdot 0 = x_i \varphi(x_{i'} y_h). $$ 
If $(i,h')\in O(\mcA)$ with $h \ne h'$, then, by definition of $\varphi$, we have
 $$
 y_{h'}\varphi(x_iy_h) =   x_j y_h y_{h'}= y_h \varphi(x_i y_{h'}). $$
Note that this relation holds whether or not $(j, h') \in O(\mcA)$, in which case all terms vanish.
By \Cref{LemmaTangentKillSyzygy}, we deduce that $[T_\mcA]_\bfd \ne 0$, as desired.

Finally, the conditions $i < j$ and the existence of a $k \in [n]$ such that $(i, k) \in O(\mcA)$ and $(j, k) \notin O(\mcA)$ are equivalent to having an $\ell$ such that $i \leq a_\ell < j$. To see the equivalence, note that for the forward direction, you can choose any $\ell$ where $(a_\ell, b_\ell) \geq (i, k)$; for the reverse direction, simply set $k = b_\ell$.
\end{proof}

\begin{prop}[Description of quadratic tangents]\label{PropQuadraticTangents}
Let $\bfd 
= \bfe_j - \bfe_i + \bff_h - \bff_k$ with $i \ne j, h \ne k$.
Then, $\dim_\kk [T]_\bfd \leq 1$.
We have $\dim_\kk [T]_\bfd = 1$ if and only if there exists $1 \leq\ell\leq s$  such that 
\begin{enumerate}
\item  $(i,k) = (a_\ell, b_\ell)$,
\item $a_{\ell-1}= a_{\ell}-1$,
\item $b_{\ell+1}= b_{\ell}-1$,
\item $a_\ell+1 \leq j \leq a_{\ell+1}$, 
\item $b_\ell+1 \leq h \leq b_{\ell-1}$.
\end{enumerate}
\end{prop}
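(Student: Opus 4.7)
The plan is as follows. By \Cref{RemarkTangentVectors}(3), any non-zero quadratic $\varphi \in T_\mcA$ with $\mdeg(\varphi) = \bfd$ must satisfy $\varphi(x_i y_k) = c \cdot x_j y_h$ for some $c \neq 0$ (forcing $(i,k)\in O(\mcA)$ and $(j,h)\notin O(\mcA)$) while vanishing on all other minimal generators of $J_\mcA$. This immediately yields the upper bound $\dim_\kk [T_\mcA]_\bfd \leq 1$. For existence, I apply \Cref{LemmaTangentKillSyzygy} to the natural candidate $\varphi(x_iy_k) = x_jy_h$, zero elsewhere. Since $\varphi$ vanishes on all generators except one, the syzygy compatibility conditions collapse to the two requirements: (b) $(j',h)\in O(\mcA)$ whenever $j'\neq i$ and $(j',k)\in O(\mcA)$; and (c) $(j,h')\in O(\mcA)$ whenever $h'\neq k$ and $(i,h')\in O(\mcA)$. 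Hence it suffices to prove that (b) and (c), together with the standing condition (a) $(i,k)\in O(\mcA)$ and $(j,h)\notin O(\mcA)$, are equivalent to (1)-(5).

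The forward direction is a direct verification using the staircase structure of $O(\mcA)$. Assuming (1)-(5), the $j' \neq i$ with $(j', k) \in O(\mcA)$ are exactly those with $j' \leq a_\ell - 1 = a_{\ell-1}$ by (2); for any such $j'$, the pair $(j', h)$ lies below $(a_{\ell-1}, b_{\ell-1}) \in \mcA$ since $h \leq b_{\ell-1}$ by (5), yielding (b). Condition (c) is symmetric using (3) and (4). Finally, $(j,h)\notin O(\mcA)$ holds because $j > a_\ell$ and $h > b_\ell$ prevent $(j,h)$ from lying under any antichain element.

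The reverse direction is the main obstacle. Set $\ell_0 = \max\{\ell : b_\ell \geq k\}$, $\ell_1 = \min\{\ell : a_\ell \geq i\}$, $\ell_h = \max\{\ell : b_\ell \geq h\}$, and $\ell_j = \min\{\ell : a_\ell \geq j\}$. Condition (a) translates to $\ell_1 \leq \ell_0$ with $i \leq a_{\ell_1}$, $k \leq b_{\ell_0}$, and rules out the case $j \leq i$ together with $h \leq k$ (by the order ideal property). Assume $h > k$, so $\ell_h \leq \ell_0$; then (b) combined with $(j,h)\notin O(\mcA)$, i.e.\ $j > a_{\ell_h}$, forces $[a_{\ell_h}+1, a_{\ell_0}] \setminus \{i\} = \emptyset$. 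By strict monotonicity of the $a_\ell$, this yields $\ell_h = \ell_0 - 1$, $a_{\ell_0-1} = a_{\ell_0}-1$, $i = a_{\ell_0}$, and also $j \geq a_{\ell_0} = i$, so $j > i$. Applying (c) in the mirrored way forces $\ell_j = \ell_1 + 1$, $b_{\ell_1+1} = b_{\ell_1}-1$, and $k = b_{\ell_1}$. Since the $b_\ell$ are strictly decreasing and $k \leq b_{\ell_0}$, the equality $k = b_{\ell_1}$ collapses $\ell_1 = \ell_0 =: \ell$, recovering (1), (2), and (3); the definitions $\ell_h = \ell - 1$ and $\ell_j = \ell + 1$ then unfold directly into the ranges (5) and (4). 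The remaining case $h \leq k$, $j > i$ is excluded by the mirror argument starting from (c), which would force $h \geq k$ and hence contradict $h \neq k$.
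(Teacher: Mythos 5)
Your proposal follows essentially the same route as the paper: the upper bound comes from \Cref{RemarkTangentVectors}, and the characterization is obtained by testing the candidate map against the syzygy conditions of \Cref{LemmaTangentKillSyzygy}. The organizational difference is that you translate those conditions once and for all into the combinatorial statements (a), (b), (c) and then argue purely on the poset $\mcP$ with the four thresholds $\ell_0,\ell_1,\ell_h,\ell_j$, whereas the paper eliminates bad configurations one at a time by exhibiting explicit nonvanishing products. Your forward direction and the reduction to (a), (b), (c) are correct.

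There is, however, a gap in the reverse direction. From $[a_{\ell_h}+1,a_{\ell_0}]\setminus\{i\}=\emptyset$ you conclude ``by strict monotonicity of the $a_\ell$'' that $\ell_h=\ell_0-1$, $a_{\ell_0-1}=a_{\ell_0}-1$, and $i=a_{\ell_0}$. But the set is also empty when $\ell_h=\ell_0$, a case that you established is \emph{a priori} possible (you only proved $\ell_h\leq\ell_0$), and in that case nothing about $i$ or about a unit jump follows, so the subsequent chain ($j\geq a_{\ell_0}=i$, hence $j>i$, hence $\ell_j>\ell_1$, etc.) breaks down. You must rule out $\ell_h=\ell_0$ before the ``strict monotonicity'' step. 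This is not hard: if $\ell_h=\ell_0$ then $h\leq b_{\ell_0}\leq b_{\ell_1}$, so $(i,h)\in O(\mcA)$, and applying your condition (c) with $h'=h\neq k$ forces $(j,h)\in O(\mcA)$, contradicting (a). With that case dispatched, the rest of your analysis goes through (and the symmetric issue for $\ell_j=\ell_1$ is then handled automatically by the derived inequality $j>i=a_{\ell_0}\geq a_{\ell_1}$). You should also state explicitly that $\ell_h$ and $\ell_j$ are taken in $\{0,\ldots,s\}$ and $\{1,\ldots,s+1\}$ respectively, using the boundary conventions $a_0=0$, $a_{s+1}=m$, $b_0=n$, $b_{s+1}=0$ of \Cref{NotationAntichain}, since otherwise $\ell_h$ or $\ell_j$ may be undefined.
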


\begin{proof}
Assume  $0 \ne\varphi\in T_\mcA$ is a multigraded tangent vector $\mdeg(\varphi) = 
 \bfe_j - \bfe_i + \bff_h - \bff_k$.
 By \Cref{RemarkTangentVectors}, 
 we have 
 $\varphi(x_iy_k) = x_jy_h$, 
 $(i,k) \in O(\mcA), 
(j,h) \notin O(\mcA)$, and
$\varphi(x_{i'}y_{k'}) =0$ for all $(i',k')\ne (i,k)$. 
In particular, $\varphi$ is uniquely determined up to scalars, so  $\dim_\kk [T]_\bfd \leq 1$.

First, we claim that $i < j$ and $k < h$.
If  this is not the case, then, without loss of generality, we have $i \geq j$,
in fact, $i > j$ since  $i \ne j$.
By \eqref{EqExchangeProperty}, we obtain $x_jy_k \in J_\mcA$.
Then,
$
x_i \varphi(x_j y_k)  = x_j \varphi(x_i y_k)  =  x_j^2 y_h \ne 0 
$
since $(j,h) \notin O(\mcA)$ and $J_\mcA$ is radical.
This contradicts the fact that $\varphi(x_jy_k)=0$.
 
Next, we show that $(i,k) = (a_\ell, b_\ell)$ for some $\ell\in [s]$.
Assume by contradiction this is not the case.
Since every element of $O(\mcA)$ is dominated by some element of $\mcA$,
without loss of generality
we have $i < a_\ell, j \leq b_\ell$ for some $\ell\in [s]$.
Then,
$
x_i \varphi(x_{i+1} y_k) =   x_{i+1} \varphi(x_i y_k)  =  x_{i+1} x_j y_h.
$
We claim that $x_{i+1} x_j y_h$ is non-zero in $ S/J_{\mcA}$,
yielding a contradiction since 
$\varphi(x_{i+1}y_k)=0$.
We have $(j,h) \notin O(\mcA)$, thus,
since $J_\mcA$ is generated in bidegree $(1,1)$, it suffices to show that  $(i+1, h)\notin O(\mcA)$.
Suppose $(i+1, h)\in O(\mcA)$,
then $(i, h)\in O(\mcA)$ by \eqref{EqExchangeProperty}, 
and
$
y_k \varphi(x_i y_h) =   y_h \varphi(x_i y_k)  =  x_j y_h^2 \ne 0 
$
since $(j,h) \notin O(\mcA)$ and $J_\mcA$ is radical.
This contradicts the fact that $\varphi(x_i y_h) = 0$,
and concludes the proof of statement (1).

Next, we claim that $(i',h) \in O(\mcA)$ whenever $i'<i$, and $(j,k')\in O(\mcA)$ whenever $k'<k$.
It suffices to prove the first one.
Assume by contradiction that $(i',h) \notin O(\mcA)$.
Observe that $(i',k) \in O(\mcA)$ by \eqref{EqExchangeProperty}.
Then,
$
x_i\varphi(x_{i'}y_k) =
x_{i'}\varphi(x_iy_k) =x_{i'} x_j y_h \ne 0
$,
since $(i',h),(j,h) \notin O(\mcA)$ and $J_\mcA$ is generated in bidegree $(1,1)$.
This contradicts $\varphi(x_{i'}y_k)=0$.

We now prove the remaining four statements.
Suppose $i=a_\ell > 1$.
By the previous paragraph we have  $(a_\ell-1,h) =(i-1,h)\in O(\mcA)$, 
whereas $(a_\ell,h)=(i,h) \notin O(\mcA)$ since $h > k = b_\ell$.
We  deduce  that $(a_\ell-1,h')\in \mcA$ for some $h'\geq h $, so, 
$(a_\ell-1,h') = (a_{\ell-1}, b_{\ell-1})$.
This shows (2) and (5), which also hold when  $a_\ell = 1$, so $\ell= 1$, cf. Notation \ref{NotationAntichain}.
The same arguments show (3) and (4).
This concludes the  proof that the conditions of the Proposition
are necessary for having $\dim_\kk[T_\mcA]_\bfd=1$.

Conversely, we show that the conditions are  sufficient.
Define a $\kk$-linear map $\varphi : [J_\mcA]_{(1,1)} \to [S/J_\mcA]_{(1,1)}$  
by
 $\varphi(x_iy_k) = x_jy_h$
 and
$\varphi(x_{i'}y_{k'}) =0$ for all $(i',k')\ne (i,k)$. 
We check that $\varphi$ satisfies the conditions of \Cref{LemmaTangentKillSyzygy}.
Since $\varphi$ is non-zero only on the monomial $x_iy_k$,
this amounts to showing that 
$x_{i'}x_jy_h\in J_\mcA$ if $(i',k) \in O(\mcA)$ and $i' \ne i$, 
and $y_{k'}x_jy_h\in J_\mcA$ if $(i,k') \in O(\mcA)$ and $k'\ne k$.
Let $i' \ne i$ such that $(i',k) \in O(\mcA)$.
Since $(i,k) = (a_\ell,b_\ell) \in \mcA$ by (1), it follows that $i'<i$,
thus, 
$i' \leq a_{\ell}-1= a_{\ell-1}$ by (2).
Since $h \leq b_{\ell-1}$ by (5), 
we obtain that  $(i',h)\in O(\mcA)$, so $x_{i'}x_jy_h\in J_\mcA$.
The other case is analogous.
\end{proof}

\section{Cutting process and proof of the main theorem}\label{SectionCuttingProof}

This section is dedicated to proving \Cref{MainTheorem}, which we accomplish through an intricate induction process with intermediate steps parameterized by various Hilbert schemes. To clarify the argument, we introduce the notion of a  \emph{cutting process} for the combinatorial data from \Cref{SectionCSIdeals}. 
This cutting process is crucial to each induction step, 
and makes the overall argument more streamlined.

Throughout this section, we continue to follow  \Cref{NotationAntichain}. 
Thus,
$\mathcal{A} = \{(a_1,b_1), \ldots, (a_s,b_s)\} 
$ denotes an antichain of the poset $\mcP=  [m]\times [n]$, where 
\begin{align*}
0 &= a_0 < a_1 < a_2 < \cdots < a_{s-1} < a_s \leq a_{s+1} = m,\\
n &= b_0 \geq b_1 > b_2 > \cdots > b_{s-1} > b_s > b_{s+1} = 0.
\end{align*}
We assume that  $\mathcal{A}\ne \emptyset$, equivalently, 
that $s>0$.

\begin{definition}\label{DefCuttingThreshold}
The {\bf cutting threshold} of $\mathcal{A}$,
denoted by $\mathrm{ct}(\mathcal{A})$, is  the smallest   $q\in \N$  such that
$0 \leq q \leq s$ and
 the following conditions hold for all  $ q+1\leq i \leq s$:
\begin{enumerate}
\item 
$a_{i} < m\,\,$ and $\,\,b_{i} < n$;
\item $a_{i}=a_{i-1}+1\,\,$ and $\,\, b_{i} = b_{i+1} +1$.
\end{enumerate}
\end{definition}

Let us  comment briefly on this definition.
We have $\ct(\mcA)=q<s$ if the   integers $a_{q},\ldots, a_s$ are consecutive and smaller than  $m$, and the  integers $b_s,  \ldots, b_{q+1}$ are consecutive, starting at $1$, and smaller than $n$;
moreover, $q$ is the smallest number for which these conditions hold.
We have $\ct(\mcA)=s$ when one of the following conditions occurs: 
$a_s = m$, 
 $b_s  =n$ (and $s = 1$), 
 $a_s > a_{s-1} +1$, 
or $b_s > 1$.

\begin{definition}[Cutting process]\label{DefCuttingProcess}
Let $q = \ct(\mcA)$.
Consider the subposets  of $\mcP$
$$
\mcP' = \big\{1, \ldots, a_{q}\} \times \{b_{q}+1, \ldots, n\big\} 
\qquad
\text{and}
\qquad
\mcP'' = \big\{a_{q}+1, \ldots, m\} \times \{1, \ldots, b_{q}\big\},
$$
with partial order inherited from $\mcP$,
and consider the induced antichains
$$
\mcA' = \big\{ (a_1, b_1) , \ldots, (a_{q-1},b_{q-1})\big\} = \mcA \cap \mcP'
\quad
\text{and}
\quad
\mcA'' = \big\{  (a_{q+1},b_{q+1}), \ldots, (a_s,b_s)\big\} = \mcA \cap \mcP''.
$$
To the posets $\mcP'$ and $\mcP''$ we associate the polynomial rings
$$
S'= \kk[x_1, \ldots, x_{a_q}, y_{b_q+1}, \ldots, y_n]
\quad \text{and} \quad
S''= \kk[x_{a_q+1}, \ldots, x_{m}, y_{1}, \ldots, y_{b_q}].
$$
They inherit a fine grading from $S$, cf. \Cref{NotationFineGrading}.
The ideals $ J_{\mcA'}\subseteq S'$  and $ J_{\mcA''}\subseteq S''$, associated to the antichains $\mcA', \mcA''$ by \Cref{DefIdealAntichain},  are Borel-fixed and radical by \Cref{PropositionBFJiffAntichain}.
\end{definition}

The cutting process produces two induced subgraphs
of the bipartite graph associated to $\mcA \subseteq \mcP$ (cf. \Cref{ExampleBipartiteGraph}).
We  illustrate \Cref{DefCuttingThreshold} and \Cref{DefCuttingProcess} with an example.

\begin{example}\label{ExampleCutting}
Let $m = n = 12, s = 7$, and  $\mcA = \big\{ (1,11),(2,10),(5,8),(7,6),(8,3),(9,2),(10,1)\big\}$.

\begin{center}
\begin{tikzpicture}
\draw [red, thick] (1,0)--(10,2);
\node at (1,-1) {$b_7$};
\node at (10,3) {$a_7$};
\draw [red, thick] (2,0)--(9,2);
\node at (2,-1) {$b_6$};
\node at (9,3) {$a_6$};
\draw [red, thick] (3,0)--(8,2);
\node at (3,-1) {$b_5$};
\node at (8,3) {$a_5$};
\draw [olive, thick] (6,0)--(7,2);
\node at (6,-1) {$b_4$};
\node at (7,3) {$a_4$};
\draw [blue, thick] (8,0)--(5,2);
\node at (8,-1) {$b_3$};
\node at (5,3) {$a_3$};
\draw [blue, thick] (10,0)--(2,2);
\node at (10,-1) {$b_2$};
\node at (2,3) {$a_2$};
\draw [blue, thick] (11,0)--(1,2);
\node at (11,-1) {$b_1$};
\node at (1,3) {$a_1$};
\foreach \i in {1,...,12}
{
\draw [fill] (\i,0) circle [radius=0.1] ;
\node at (\i,-0.5) {$y_{\i}$};
\draw [fill] (\i,2) circle [radius=0.1];
\node at (\i,2.5) {$x_{\i}$};
}
\end{tikzpicture}
\end{center}
The cutting threshold of $\mcA$ is $\ct(\mcA) = q= 4$:  
\begin{enumerate}
\item  $a_s=10$ is not the last $x$-vertex $m=12$, and  $b_{q+1} =3$ is not the last $y$-vertex $n=12$,
\item the last $s-q+1$ values of $a_i$ are consecutive: they are $7,8,9,10$,
\item the first  $s-q$ values of $b_i$ are consecutive and start at 1: they are $ 1,2,3$,
\end{enumerate}
and the smaller integer $q =3$ does not satisfy some of these conditions.

The first new poset is $\mcP' =\{1,\ldots, 7\} \times \{ 7, \ldots, 12\}$.
Considering  the $q-1=3$ edges induced by the remaining vertices, 
the corresponding antichain is $\mcA' = \big\{ (1,11),(2,10),(5,8)\big\}$.
\begin{center}
 \begin{tikzpicture}
\foreach \i in {7,...,12}
{
\draw [fill] (\i,0) circle [radius=0.1] ;
\node at (\i,-0.5) {$y_{\i}$};
}
\foreach \i in {1,...,7}
{
\draw [fill] (\i,2) circle [radius=0.1];
\node at (\i,2.5) {$x_{\i}$};
}
\node at (7,3) {($a_4$)};
\draw [blue, thick] (8,0)--(5,2);
\node at (8,-1) {$b_3$};
\node at (5,3) {$a_3$};
\draw [blue, thick] (10,0)--(2,2);
\node at (10,-1) {$b_2$};
\node at (2,3) {$a_2$};
\draw [blue, thick] (11,0)--(1,2);
\node at (11,-1) {$b_1$};
\node at (1,3) {$a_1$};
\end{tikzpicture}
\end{center}

The second new poset is $\mcP'' =\{8,\ldots, 12\} \times \{1, \ldots, 6\}$.
Considering  the $s-q=3$ edges induced by the remaining vertices, 
the corresponding antichain is $\mcA'' = \big\{ (8,3),(9,2),(10,1)\big\}$.

\begin{center}
\begin{tikzpicture}
\draw [red, thick] (1,0)--(10,2);
\node at (1,-1) {$b_7$};
\node at (10,3) {$a_7$};
\draw [red, thick] (2,0)--(9,2);
\node at (2,-1) {$b_6$};
\node at (9,3) {$a_6$};
\draw [red, thick] (3,0)--(8,2);
\node at (3,-1) {$b_5$};
\node at (8,3) {$a_5$};
\node at (6,-1) {$(b_4)$};
\foreach \i in {1,...,6}
{
\draw [fill] (\i,0) circle [radius=0.1] ;
\node at (\i,-0.5) {$y_{\i}$};
}
\foreach \i in {8,...,12}
{
\draw [fill] (\i,2) circle [radius=0.1];
\node at (\i,2.5) {$x_{\i}$};
}
\end{tikzpicture}
\end{center}

\end{example}

Following
 the setup of  \Cref{SectionBigradedGrassmannians},
we let $\alpha = a_q, \beta = b_q$,
then the polynomial ring $S', S''$ are the fibers of $\scR', \scR''$ over the point  $(\sfV_\circ,\sfW_\circ)\in \G$  given by 
$$
\sfV_\circ = \mathrm{Span}_\kk(x_1, \ldots, x_{a_q}), \quad \sfW_\circ = \mathrm{Span}_\kk(y_1, \ldots, y_{b_q}).
$$
Following  notation of \Cref{LemmaMapIntersection},  consider the ideals 
$$J_1 = \pi^{-1}(\iota(J_{\mcA'}))\subseteq S\quad
\text{and}
\quad  
J_2 = \pi^{-1}(\iota(J_{\mcA''})) \subseteq S.
$$

\begin{prop}\label{PropIntersectionIdealCutting}
We have
$J_\mcA = J_1 \cap J_2$ and $J_1+J_2 = (\sfV_\circ) +(\sfW_\circ)$.
\end{prop}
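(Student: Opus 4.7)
The plan is to give explicit monomial descriptions of $J_1$ and $J_2$ in $S$, and then verify both equalities by direct inspection of generators. As in the proof of \Cref{LemmaMapIntersection}, the preimage construction yields
\[
J_1 = (\sfW_\circ) + \bigl(x_ay_b \,:\, (a,b) \in O(\mcA')\bigr), \qquad
J_2 = (\sfV_\circ) + \bigl(x_ay_b \,:\, (a,b) \in O(\mcA'')\bigr),
\]
where the extra generators arise by lifting the squarefree generators of $J_{\mcA'} \subseteq S'$ and $J_{\mcA''} \subseteq S''$ to $S$. Note that these extra generators of $J_1$ have $a \leq a_{q-1}$ and $b > b_q$, while those of $J_2$ have $a > a_q$ and $b \leq b_{q+1}$.

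For the sum equality, $(\sfW_\circ) \subseteq J_1$ and $(\sfV_\circ) \subseteq J_2$ are immediate, yielding $(\sfV_\circ) + (\sfW_\circ) \subseteq J_1 + J_2$. For the reverse, each extra generator $x_a y_b$ of $J_1$ satisfies $a \leq a_q$, so $x_a \in (\sfV_\circ)$; symmetrically each extra generator of $J_2$ lies in $(\sfW_\circ)$.

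For the intersection equality, I plan to verify $J_\mcA \subseteq J_1 \cap J_2$ on each generator $x_a y_b$ with $(a,b) \leq (a_i, b_i) \in \mcA$. If $i \leq q-1$ then $(a,b) \in O(\mcA')$ (so $x_a y_b \in J_1$) and $a \leq a_q$ (so $x_a y_b \in (\sfV_\circ) \subseteq J_2$); the case $i \geq q+1$ is symmetric; and $i = q$ gives $a \leq a_q$ and $b \leq b_q$, placing $x_a y_b$ in both $(\sfV_\circ) \subseteq J_2$ and $(\sfW_\circ) \subseteq J_1$.

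The reverse $J_1 \cap J_2 \subseteq J_\mcA$ relies on the standard fact that the minimal generators of an intersection of monomial ideals are lcms of pairs of minimal generators. The only case requiring a moment of thought is when both generators have bidegree $(1,1)$: the lcm is $x_a x_{a'} y_b y_{b'}$ with $(a,b) \in O(\mcA')$ and $(a',b') \in O(\mcA'')$, which is divisible by $x_a y_b \in J_\mcA$ because $O(\mcA') \subseteq O(\mcA)$. The other lcm types, involving a linear generator $x_{a'} \in (\sfV_\circ)$ or $y_{b'} \in (\sfW_\circ)$, reduce to the observation that $(a,b) \leq (a_q, b_q) \in \mcA$ whenever $a \leq a_q$ and $b \leq b_q$. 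I do not foresee a serious obstacle: the cutting threshold is designed precisely so that $\mcA = \mcA' \sqcup \{(a_q, b_q)\} \sqcup \mcA''$ decomposes cleanly, keeping the case analysis routine.
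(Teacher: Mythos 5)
Your route is more elementary and self-contained than the paper's, and it is essentially correct, though one step of the forward inclusion is stated too strongly. The paper avoids the lcm bookkeeping altogether: it first observes that $J_1 \cap J_2$ is radical and Borel-fixed, hence Cartwright-Sturmfels; since $J_1$ and $J_2$ share no common linear forms, \Cref{PropBasicPropertiesCS} forces $J_1 \cap J_2$ to be generated in bidegree $(1,1)$, and the equality $J_\mcA = J_1 \cap J_2$ reduces to a comparison of the degree-$(1,1)$ components, settled by a direct case check from the explicit monomial presentations of $J_1,J_2$. Your lcm approach bypasses the Cartwright-Sturmfels machinery and proves the generation-in-degree-$(1,1)$ claim by hand, which makes the argument portable to settings where those structural results are unavailable; the cost is the extra case bookkeeping. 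The one point to fix: in verifying $J_\mcA \subseteq J_1 \cap J_2$ you claim that $(a,b) \leq (a_i,b_i)$ with $i \leq q-1$ forces $(a,b) \in O(\mcA')$, which fails when $b \leq b_q$, since then $(a,b) \notin \mcP'$. The conclusion $x_ay_b \in J_1$ still holds, but for a different reason in that subcase: $y_b \in \sfW_\circ \subseteq J_1$. You should split into the subcases $b > b_q$ and $b \leq b_q$, and make the symmetric split on $a$ for $i \geq q+1$. With that correction added, the argument goes through.
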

\begin{proof}
The intersection $J_1 \cap J_2$ is radical, since both ideals are.
It is also Borel-fixed, since both ideals are, 
as it follows from the exchange property.
Thus,  $J_1 \cap J_2$ is a Cartwright-Sturmfels ideal.
Since $J_1, J_2$ do not contain  common linear forms, 
 $J_1 \cap J_2$ is generated in degree $(1,1)$ by \Cref{PropBasicPropertiesCS}.
Thus, in order to prove that $J_\mcA = J_1 \cap J_2$,
it suffices to show that, for a given $x_ay_b\in S$, we have $x_ay_b \in J_\mcA$ if and only if 
 $x_ay_b \in J_1\cap J_2$.
For simplicity, let us write the generators of the two ideals explicitly 
\begin{equation}\label{EqJ1J2Explicit}
\begin{split}
J_1 = 
\big(y_b \, : \, b \leq b_q\big) + 
\big(x_ay_b \, : \, a \leq a_q, b\geq b_q+1, (a,b) \leq (a_\ell, b_\ell) \text{ for some } \ell \leq q-1
\big),
\\
J_2 = 
\big( x_a \, :\, a \leq a_q \big)
+ \big(x_ay_b \, : \, a \geq a_q+1, b\leq b_q, (a,b) \leq (a_\ell, b_\ell) \text{ for some } \ell \geq q+1
\big).
\end{split}
\end{equation}
Suppose $a\leq a_q$.
Then, $x_ay_b \in J_2$.
We have $x_ay_b \in J_1$ if and only if $b \leq b_q$ or $b \leq b_q+1$ and
$(a,b) \leq (a_\ell, b_\ell) $ for some $\ell \leq q-1$,
equivalently, if and only if $(a,b) \leq (a_\ell, b_\ell) $ for some $\ell \leq q$.
We have $x_ay_b \in J_\mcA$ if and only if $(a,b) \in O(\mcA)$;
since  $a\leq a_q$,
 this means that $(a,b) \leq (a_\ell, b_\ell) $ for some $\ell \leq q$.
In conclusion, $x_ay_b \in J_1\cap J_2$ if and only if $x_ay_b \in J_\mcA$.
If $b \leq b_q$, we reach the same conclusion by the symmetric argument.
If $a > a_q$ and $ b > b_q$, then $x_ay_b \notin J_\mcA$
because $(a,b) \notin O(\mcA)$,
and $x_ay_b \notin J_1\cap J_2$ by \eqref{EqJ1J2Explicit}.

Finally,
the  equation $J_1+J_2 = (\sfV_\circ) +(\sfW_\circ)$ follows immediately from \eqref{EqJ1J2Explicit}.
\end{proof}

\begin{prop}\label{PropTangentDecompositionCutting}
Let $T, T'$, and  $T''$ denote the tangent spaces to $J_\mcA \in \mcH^\mfh_\kk(S),
J_{\mcA'} \in \mcH^{\mfh'}_\kk(S')$, and $J_{\mcA''} \in \mcH^{\mfh''}_\kk(S''),$ respectively.
Then,
we have
$$\dim T = \dim T' + \dim T'' + a_q(m-a_q)+b_q(n-b_q).$$ 
\end{prop}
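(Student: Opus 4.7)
My approach is to directly compute $\dim T$, $\dim T'$, and $\dim T''$ by counting the nonzero multigraded components of each, using the classification from \Cref{SectionTangentSpace}. By \Cref{RemarkTangentVectors}, every nonzero multigraded tangent vector is either linear (with multidegree $\bfe_j-\bfe_i$ or $\bff_h-\bff_k$) or quadratic (with multidegree $\bfe_j-\bfe_i + \bff_h-\bff_k$), and by \Cref{PropLinearTangents} and \Cref{PropQuadraticTangents} each such component is at most one-dimensional. Thus $\dim T$ equals the number of multidegrees that carry a nonzero tangent, and analogously for $T'$ and $T''$; the proposition becomes a counting identity.

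For linear tangents of type $\bfe_j-\bfe_i$, \Cref{PropLinearTangents} requires $1\leq i<j\leq m$ and some $a_\ell$ in $[i,j-1]$. I would partition these pairs according to how $i$ and $j$ sit relative to $a_q$. The pairs with $i,j \leq a_q$ need $\ell\leq q-1$, and they recover exactly the linear $x$-tangents of $T'$ (since $\mcA'$ uses sentinel $a_{s'+1}'=a_q$); the pairs with $i,j > a_q$ need $\ell\geq q+1$, recovering the linear $x$-tangents of $T''$; and the pairs with $i \leq a_q < j$ are always witnessed by $\ell=q$, giving exactly $a_q(m-a_q)$ additional multidegrees. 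The symmetric argument for $\bff_h-\bff_k$ contributes an extra $b_q(n-b_q)$ multidegrees.

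For the quadratic tangents, \Cref{PropQuadraticTangents} attaches to each $\ell\in[s]$ a contribution equal to $(a_{\ell+1}-a_\ell)(b_{\ell-1}-b_\ell)$ when $a_{\ell-1}=a_\ell-1$ and $b_{\ell+1}=b_\ell-1$, and zero otherwise. Because the sentinel conventions for $\mcA'$ satisfy $a_{s'+1}'=a_q$ and $b_{s'+1}'=b_q$ (and symmetrically for $\mcA''$), the contribution of any $\ell\leq q-1$ to $T$ equals its contribution to $T'$, and that of any $\ell\geq q+1$ equals the contribution of the corresponding reindexed element to $T''$. The main obstacle, and the combinatorial heart of the proof, is to show that the middle index $\ell=q$ contributes zero to $T$. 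This is precisely where the choice $q=\ct(\mcA)$ enters: by \Cref{DefCuttingThreshold}, at least one of the conditions for $i=q$ must fail. If $a_{q-1}\neq a_q-1$ or $b_{q+1}\neq b_q-1$, the prerequisites of \Cref{PropQuadraticTangents} are violated outright. If $a_q=m$, the sentinel $a_{q+1}=m$ forces $a_{q+1}-a_q=0$. If $b_q=n$, the strict monotonicity $n=b_0\geq b_1>b_2>\cdots$ forces $q=1$ and $b_{q-1}=n$, hence $b_{q-1}-b_q=0$. In every case the $\ell=q$ contribution vanishes, and summing the linear and quadratic counts yields the desired formula.
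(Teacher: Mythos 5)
Your proof follows the same strategy as the paper's: reduce via \Cref{RemarkTangentVectors}, \Cref{PropLinearTangents}, and \Cref{PropQuadraticTangents} to a count of multidegrees, partition the linear-tangent pairs according to their position relative to $a_q$ (resp.\ $b_q$) to peel off the $a_q(m-a_q)$ and $b_q(n-b_q)$ terms, identify the $\ell\leq q-1$ and $\ell\geq q+1$ quadratic contributions with those of $T'$ and $T''$, and argue that the index $\ell=q$ contributes nothing because the cutting-threshold conditions of \Cref{DefCuttingThreshold} fail there. Your case analysis for why the $\ell=q$ quadratic contribution vanishes is slightly more explicit than the paper's (which simply cites the failure of the conditions in \Cref{PropQuadraticTangents}), but the argument is otherwise identical.
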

\begin{proof}
By  \Cref{RemarkTangentVectors},
it suffices to consider linear and quadratic tangents, that is, those whose multidegree is of the form $\bfd = \bfe_j-\bfe_i,$ $\bfd = \bff_h-\bff_k,$ or $\bfd =
\bfe_j-\bfe_i + \bff_h-\bff_k$.

By \Cref{PropLinearTangents},
the linear tangents  of $T$ with multidegrees of the form  $\bfd = \bfe_j-\bfe_i$ are in bijection with pairs $i<j$ such that $i \leq a_\ell < j$ for some $\ell$.
We  divide these pairs into three sets:
\begin{enumerate}
\item Pairs such that  $i \leq a_\ell < j\leq a_q$ for some $\ell \leq q-1$.
They are in bijection with the linear tangents  of $T'$ with multidegrees of the form  $\bfd = \bfe_j-\bfe_i$.
\item Pairs such that $a_q < i \leq a_\ell < j$ for some $\ell \geq q+1$.
They are in bijection with the linear tangents  of $T''$ with multidegrees of the form  $\bfd = \bfe_j-\bfe_i$.
\item Pairs such that  $i \leq a_q  < j$. 
There are exactly $a_q(m-a_q)$ of them.
\end{enumerate}
The same analysis applies to the linear tangents with multidegrees of the form  $\bfd = \bff_h-\bff_k$, where the number of tangents in the third set is now  $b_q(n-b_q)$.

By \Cref{PropQuadraticTangents},
the quadratic tangents of $T$ 
are in bijection with  the multidegrees    $\bfd 
= \bfe_j - \bfe_i + \bff_h - \bff_k$ 
where  $(i,k) = (a_\ell, b_\ell)$ for some $\ell$, 
and moreover  $a_{\ell-1}= a_{\ell}-1$,
 $b_{\ell+1}= b_{\ell}-1$,
 $a_\ell+1 \leq j \leq a_{\ell+1}$, and
 $b_\ell+1 \leq h \leq b_{\ell-1}$.
We can divide them into three sets:
\begin{enumerate}
\item 
If $\ell \leq q-1$,
these multidegrees are in bijection with the quadratic tangents of $T'$.
\item If $\ell \geq q+1$,
these multidegrees are in bijection with the quadratic tangents of $T''$.
\item 
If $\ell = q$, this set is actually empty.
Indeed, since $
\ell = q = \ct(\mcA)$,
one of the following must occur by \Cref{DefCuttingThreshold}:
$a_\ell =m, b_\ell = n, a_\ell > a_{\ell-1}+1$, or $b_\ell > b_{\ell+1}+1$.
It follows that the conditions of \Cref{PropQuadraticTangents} are not satisfied for any $j,h$.
\end{enumerate}
Adding the contributions of 
 linear and quadratic tangents, 
we obtain the desired formula.
\end{proof}

With all this in place, we are finally prepared to prove the main result of this paper.

\begin{proof}[Proof of \Cref{MainTheorem}]
We prove the statement by induction on $n+m$, the case $n+m=0$ being trivial.
Since $\mcH^\mfh_\kk(S)$ is connected by \Cref{LemmaBFDegeneration},
it suffices to show that $\mcH^\mfh_\kk(S)$ is smooth.

By  \Cref{PropSmoothConcentratedPositiveDegrees} and \Cref{PropBasicPropertiesCS} (3), we may assume that $\mcH^\mfh_\kk(S)$ parametrizes ideals concentrated in positive degrees.
By  \Cref{PropBasicPropertiesCS} (1), this implies that every ideal  parametrized by $\mcH^\mfh_\kk(S)$ is generated in degree $(1,1)$.
By  \Cref{PropositionBFJiffAntichain}, the unique Borel-fixed ideal of 
$\mcH^\mfh_\kk(S)$ is of the form $J=J_\mcA$ for some antichain $\mcA$, cf. 
\Cref{NotationAntichain} and
\Cref{DefIdealAntichain}.
If $s=0$, that is, if $\mcA = \emptyset$, then $J_\mcA = 0$,
and it follows that $\mcH^\mfh_\kk(S)$ is a reduced point.
Thus, we assume that $s>0$, and 
let $q= \ct(\mcA)$.

Suppose first that $q>0$.
We  apply the cutting process (\Cref{DefCuttingProcess}).
Let $J' := J_{\mcA'}\subseteq S'$ and $J'' := J_{\mcA''}\subseteq S''$, and let $\mfh',\mfh''$ be the respective Hilbert functions. Also, consider the associated Cartwright-Sturmfels Hilbert schemes  $\mcH^{\mfh'}_\kk(S')$ and $\mcH^{\mfh''}_\kk(S'')$.

By 	\Cref{PropIntersectionIdealCutting},
it follows that the Hilbert functions $\mfh, \mfh', \mfh''$ are related as in \Cref{LemmaMapIntersection}.
Therefore, 
we can consider the map of 
\Cref{PropProjectionFinite}
\begin{equation}\label{EqMorphismIntersection}
\mcH_\G^{\mfh'}(\scR') \times_\G \mcH_\G^{\mfh''}(\scR'') \to \mcH_\kk^\mfh(S).
\end{equation}
This is a finite morphism of projective $\kk$-schemes.

Since $q>0$,  both $S'$ and $S''$ have fewer variables than $S$. It follows by induction that the schemes $\mcH_\kk^{\mfh'}(S')$ and $\mcH_\kk^{\mfh''}(S'')$ are smooth and irreducible.
The fiber of the structure morphism $\mcH_\G^{\mfh'}(\scR') \times_\G \mcH_\G^{\mfh''}(\scR'') \to \G$ over an arbitrary $(\sfV,\sfW) \in \G$ 
is isomorphic to $\mcH_\kk^{\mfh'}(S') \times_\kk \mcH_\kk^{\mfh''}(S'')$  (see \Cref{RemarkBaseChange}).
By the same argument as in the proof of \Cref{PropSmoothConcentratedPositiveDegrees},
it follows that the  source scheme in \eqref{EqMorphismIntersection} is smooth and irreducible,  with dimension given by
\begin{equation}\label{EqDimensionSource}
\dim \big( \mcH_\G^{\mfh'}(\scR') \times_\G \mcH_\G^{\mfh''}(\scR'') \big)= 
\dim \mcH_\kk^{\mfh'}(S') + \dim \mcH_\kk^{\mfh''}(S'')+\dim \G.
\end{equation}
Since the morphism \eqref{EqMorphismIntersection} is finite, 
the quantity \eqref{EqDimensionSource} is a lower bound for the local dimension at some point 
of $\mcH_\kk^\mfh(S)$. 
Since local dimension is upper semicontinuous, 
it follows by  \Cref{LemmaBFDegeneration},
that this is a lower bound for the local dimension of 
$\mcH_\kk^\mfh(S)$
at the Borel-fixed point $J$.

Since each of the schemes in the right hand side of \eqref{EqDimensionSource} is smooth and irreducible,  their dimensions  agree with the dimensions of the tangent space at any point.
Thus, by  \Cref{PropTangentDecompositionCutting}, 
the quantity  \eqref{EqDimensionSource}  is also equal to the dimension of the tangent space to $\mcH_\kk^\mfh(S)$ at $J$.
This proves that  $J$ is a smooth point, and,
by \Cref{LemmaBFDegeneration} again, 
it follows that $\mcH_\kk^\mfh(S)$ is smooth.

Now suppose that $q=0$. In this case, note that $\mcP = \mcP''$. We will now construct two explicit parameterizations of $\mcH_\kk^\mfh(S)$: one for when $s \geq 2$ and one for when $s=1$.

Assume that $s\geq 2$.
It follows from 
\Cref{DefCuttingThreshold}
that $a_i = b_{s+1-i}= i $ for $i = 1, \ldots, s$,
and that
$m \geq a_s+1 = s+1$ 
and $n \geq b_1+1 =s+1$.
Pick subspaces $\sfV \subseteq \sfX= \mathrm{Span}_\kk( x_1, \ldots, x_m)$ and 
$\sfW \subseteq \sfY= \mathrm{Span}_\kk( y_1, \ldots, y_n)$
with $\dim \sfV = \dim \sfW = s+1$,
and ordered bases $(f_1, \ldots, f_{s+1})$ of $\sfV$ and $(g_1, \ldots, g_{s+1})$ of $\sfW$. 
Consider the ideal of $2$-minors of the $2\times(s+1)$ matrix formed by these two rows
\begin{equation}\label{EqIdeal2Minors}
I_2\begin{pmatrix}
f_1 & \cdots & f_{s+1}\\
g_1 & \cdots & g_{s+1}
\end{pmatrix}.
\end{equation}
It follows from 
\cite[Proof of Corollary 3.6]{CartwrightSturmfels}
that the ideal \eqref{EqIdeal2Minors} has the same Hilbert function as $J_\mcA$.
Thus, the formula \eqref{EqIdeal2Minors} defines a morphism 
\begin{equation}\label{EqMorphismGrPGL}
\mathrm{Gr}(s+1,m)\times \mathrm{Gr}(s+1,n)\times \mathrm{PGL}(s+1)^2 \to \mcH^\mfh_\kk(S).
\end{equation}
The ideal \eqref{EqIdeal2Minors} recovers the subspaces $\sfV, \sfW$ uniquely,
since its ideal of partial derivatives is generated by $\sfV+\sfW$.
It recovers the two ordered bases up to a  $\mathrm{PGL}(s+1)$ embedded diagonally in $\mathrm{PGL}(s+1)^2$, cf. \cite[Section 3]{CartwrightSturmfels}.
It follows that the fiber of \eqref{EqMorphismGrPGL} 
over each point in the image has dimension $(s+1)^2-1$,
and therefore the dimension of $\mcH^\mfh_\kk(S)$ at its Borel-fixed point $J_\mcA$ is at least
\begin{equation}\label{EqDimImage}
 (m-s-1)(s+1) + (n-s-1)(s+1)+ (s+1)^2-1.
\end{equation}
Now we compute the dimension of the tangent space at $J_\mcA$.
By \Cref{PropLinearTangents},
the number  of linear tangents with multidegree of the form $\bfd = 	\bfe_j-\bfe_i$ is equal to the number of pairs $i<j$ such that $i \leq a_\ell<j$ for some $\ell$,
equivalently, in this case,
such that $i <j$ and $i \leq s$.
We obtain  $\sum_{i=1}^{s}(m-i)=sm - {s+1 \choose 2}$  linear tangents.
Likewise, there are  $\sum_{k=1}^{s}(n-k)=sn - {s+1 \choose 2}$ linear tangents 
with multidegree of the form $\bfd = 	\bff_j-\bff_i$.
By \ref{PropQuadraticTangents},
for each $\ell = 1, \ldots, s$ the number of quadratic tangents with $(i,k) = (a_\ell,b_\ell)$ is $(a_{\ell+1}-a_\ell)(b_{\ell-1}-b_\ell)$.
This number is $n-s$ if $\ell = 1$, 1 if $\ell = 2 \ldots s-1$, and $m-s$ if $\ell = s$.
In conclusion, the tangent space has dimension
$$
sm - {s+1 \choose 2}+sn - {s+1 \choose 2}+(n-s)+s-2+(m-s).
$$
This number agrees with \eqref{EqDimImage}, and the conclusion follows as in the first part of the proof.

Finally, assume that  $s=1$.
We have $J_\mcA = (x_1y_1)$, so the Hilbert scheme parametrizes ideals generated by a bigraded quadric $f \in [S]_{(1,1)} \simeq \sfX \otimes \sfY$,
that is,  $\mcH^\mfh_\kk(S)\simeq \mathbb{P}(\sfX \otimes \sfY) $,
and the proof is concluded.
\end{proof}

\begin{remark}\label{RemarkSmallSchemes}
The work \cite{EGHP} served as a significant inspiration for the proof of our main theorem. 
Specifically, 
the algebraic subsets of $\mathbb{P}^{n+m-1}$ defined by our ideals  are small schemes.
By \cite[Theorem 0.4]{EGHP}, this implies that their irreducible components are linearly joined;
this condition plays a key technical role throughout our induction scheme, 
as seen in \Cref{SectionBigradedGrassmannians}, \Cref{DefCuttingThreshold} and \Cref{DefCuttingProcess}.
\end{remark}

\section{Future directions}
\label{SectionQuestions}

In this paper,  we initiated the general study of Cartwright-Sturmfels Hilbert schemes,
providing an essentially complete description in the bigraded case. 
In general, 
Cartwright-Sturmfels Hilbert schemes appear to exhibit significantly better behavior compared 
to the vast and wild class of arbitrary multigraded Hilbert schemes.
In this section, we indicate several  lines of investigation motivated by our paper.

In his seminal paper on Murphy’s law for singularities \cite{Vakil}, Vakil broadly categorized moduli spaces into two groups: those with good behavior, such as the moduli space of curves and the Hilbert schemes of arithmetically Cohen-Macaulay codimension 2 subschemes, and those with  pathological behavior, like the moduli of surfaces and  Hilbert schemes of codimension 2 subschemes, see \cite[Subsection 1.2]{Vakil}.
 We believe that  Cartwright-Sturmfels Hilbert schemes likely fall into the first category. An important first step towards making this more precise would be answering the following question.

\begin{question} 
Are Cartwright-Sturmfels Hilbert schemes  reduced?
\end{question}

Even if this is not the case, the fact that all  ideals parameterized by a Hilbert scheme are radical is quite strong, and it might restrict the possible singularities.

In analogy with \cite{SkjelnesSmith}, 
one could seek a complete classification of smooth Cartwright-Sturmfels Hilbert schemes. 
We expect that the combinatorial analysis in the case of Picard rank $r = 3$
 will be manageable, and some of our techniques may be extendable.

\begin{question} Classify the smooth Cartwright-Sturmfels Hilbert schemes.
\end{question}

As we have seen, 
the geometry of a bigraded Cartwright-Sturmfels Hilbert scheme is governed by the combinatorics of a bipartite graph. 
However, there is another notable instance of Cartwright-Sturmfels Hilbert schemes 
 depending on the combinatorics of a graph: by \cite[Theorem 3.1]{CDNG18},
the binomial edge ideal of a simple graph is Cartwright-Sturmfels.
There is a vast literature on the combinatorics of such ideals, but their behavior in moduli remains largely unexplored. 

\begin{question} 
Study Cartwright-Sturmfels Hilbert schemes of binomial edge ideals. 
For example, does the binomial edge ideal belong to a unique component?
 What is the dimension of a component containing the binomial edge ideal? 
\end{question}

Varieties with reduced flat degenerations exhibit exceptional behavior in many respects; for example, their singularities, cohomological invariants, and birational behavior are all highly constrained \cite{Brion, CV20, CDV20}. 
If the ideal defining the variety is Cartwright-Sturmfels, there are no other possible degenerations. 
Thus, from the standpoint of deformation theory, it is natural to ask the following question:

\begin{question}
Are points corresponding to prime ideals\footnote{By \cite[Proposition 3.6]{CDNG22},
being prime and Cartwright-Sturmfels is equivalent to saying that the ideal defines a  a multiplicity-free subvariety of $\mathbb{P}^{\mathbf{n}}$.
} on a Cartwright-Sturmfels Hilbert scheme always smooth?
\end{question}

As observed in \Cref{RemarkSmallSchemes},
bigraded Cartwright-Sturmfels ideals define small subschemes of projective space, in the sense of \cite{EGHP}, and this fact has important repercussions on the geometry of their multigraded Hilbert schemes.
A different, but related, situation is that of the locus of 2-determinantal ideals in the Grothendieck Hilbert scheme, which was investigated in \cite{RS24}.
Like Cartwright-Sturmfels ideals, these ideals also define small schemes in projective space, 
and their locus in the Hilbert scheme is well-structured.
These two examples motivate the following question.

\begin{question}
Study the deformation theory of small schemes.
For example, when does a small scheme correspond to a smooth point on the Grothendieck Hilbert scheme?
Is it possible to classify the flat degenerations among  small schemes?
\end{question}

\subsection*{Acknowledgments}
The authors would like to thank Aldo Conca, Andres Fernandez Herrero, and Allen Knutson for some helpful conversations.
Computations with Macaulay2
\cite{Macaulay2} provided valuable insight during the preparation of this paper.
Ritvik Ramkumar was partially supported by NSF grant DMS-2401462, and Alessio Sammartano  was partially supported by the grant PRIN 2020355B8Y and by the INdAM – GNSAGA Project CUP E55F22000270001.

\end{document}